\newcommand{\R}{\mathbb{R}}
\newcommand{\ov}{\overline}
\newcommand{\ep}{\varepsilon}
\newcommand{\la}{\lambda} 
\numberwithin{equation}{section}
\theoremstyle{plain} 
\newtheorem{thm}{Theorem}[section]
\newtheorem{lem}{Lemma}[section]
\newtheorem{prop}{Proposition}[section]
\theoremstyle{defn}
\theoremstyle{remark}
\newtheorem{rem}{Remark}[section]
\definecolor{ForestGreen}{RGB}{34,139,34}
\definecolor{ao(english)}{rgb}{0.0, 0.5, 0.0}
\begin{document}

\title[Singular perturbations with unbounded data]{Singular perturbations in stochastic optimal control with unbounded data}
\thanks{The first author is member of the Gruppo Nazionale per l'Analisi Matematica, la Probabilit\`a e le loro Applicazioni (GNAMPA) of the Istituto Nazionale di Alta Matematica (INdAM). He also  participates in the King Abdullah University of Science and Technology (KAUST) project CRG2021-4674 ``Mean-Field Games: models, theory, and computational aspects''. 
\\
\indent
The 
 second author is funded by the Deutsche Forschungsgemeinschaft (DFG, German Research Foundation) – Projektnummer 320021702/GRK2326 – Energy, Entropy, and Dissipative Dynamics (EDDy). The results of this paper are part of his Ph.D. thesis \cite{kouhkouh22phd} which was conducted when he was a Ph.D. student at the University of Padova.
}

\author{Martino Bardi}
\address {Martino Bardi \newline \indent
{Department of Mathematics “T. Levi-Civita”, \newline \indent 
University of Padova, via Trieste, 63},
\newline \indent
{I-35121 Padova, Italy}
}
\email{\texttt{bardi@math.unipd.it}},

\author{Hicham Kouhkouh}
\address{Hicham Kouhkouh \newline \indent
{RWTH Aachen University, Institut f\"ur Mathematik,  \newline \indent 
RTG Energy, Entropy, and Dissipative Dynamics,\newline \indent
Templergraben 55 (111810)},
 \newline \indent 
{52062, Aachen, Germany}
}
\email{\texttt{kouhkouh@eddy.rwth-aachen.de}}





\date{\today}

\begin{abstract}
We study singular perturbations of a class of two-scale stochastic control systems with unbounded data. The assumptions are designed to cover some relaxation problems for deep neural networks. We construct effective Hamiltonian and initial data and prove the convergence of the value function to the solution of a limit (effective) Cauchy problem for a parabolic equation of HJB type. We use methods of probability, viscosity solutions and homogenization.
\end{abstract}

\subjclass[MSC]{35B25, 93E20, 93C70, 49L25}
\keywords{Singular perturbations, two-scale systems, stochastic optimal control,  homogenization, viscosity solutions, Hamilton-Jacobi-Bellman equations, invariant measures.}

\maketitle






\section{Introduction}

In this paper we study the asymptotic behavior as $\varepsilon\to 0$ of a system of controlled two-scale
 stochastic differential equations 
\begin{equation}\label{eq: SDE eps intro}
\tag{$SDE\left(\frac{1}{\varepsilon}\right)$}
\begin{aligned}
    \text{d}X_{t} &= f(X_{t},Y_{t},u_{t})\,\text{d}t + \sqrt{2}\,\sigma^{\varepsilon}(X_{t},Y_{t},u_{t})\,\text{d}W_{t},\\
    \text{d}Y_{t} & = \frac{1}{\varepsilon}\,b(X_{t},Y_{t})\,\text{d}t + \sqrt{\frac{2}{\varepsilon}\,}\varrho(X_{t},Y_{t})\,\text{d}W_{t},
\end{aligned}
\end{equation}
where $X_{t}\in \mathds{R}^n$ is the \textit{slow} dynamics, $Y_{t}\in \mathds{R}^m$ is the \textit{fast} dynamics, $u_{t}$ is the control taking values in a given compact set $U$ and $W_{t}$ is a multidimensional Brownian motion.  
We will allow the components of the drift and the diffusion of the slow dynamics to be unbounded and with at most linear growth in the fast variables $Y$. While the diffusion coefficient of the process $X$ can be degenerate (i.e. $\sigma^{\varepsilon}=0$ is allowed), the diffusion coefficient of the process $Y$ is required to be nondegenerate, in particular we will assume for our main result that $\varrho\varrho^{\top}$ is the identity matrix times a positive constant, in addition to other structural assumptions on the data that we shall make precise  later. We carry our analysis in the context of stochastic optimal control problems with payoff functional
\begin{equation*}
    J(t,x,y,u):=\mathds{E}\left[ 
    e^{\lambda(t-T)}g(X_{T},Y_{T}) + 
    \!\int_{t}^{T}\!\ell(s,X_{s},Y_{s},u_{s})e^{\lambda(s-T)}\text{d}s\, \bigg| X_{t}=x,Y_{t}=y
    \right],
\end{equation*}
and exploit that the value function $V^{\varepsilon}(t,x,y):=\sup
_{u}\, J(t,x,y,u)$ solves in the viscosity sense a fully nonlinear parabolic degenerate Hamilton-Jacobi-Bellman PDE in $(0,T)\times\mathds{R}^n\times \mathds{R}^m$.
     
Our motivation comes from the Stochastic Gradient Descent algorithm in the context of Deep Learning and Big Data analysis. The following special case of \eqref{eq: SDE eps intro}, without control, was proposed  in  \cite{chaudhari2018deep} to justify  an algorithm of  Stochastic Gradient Descent called Deep Relaxation (see also \cite{pavon2022local}). Given a {\em loss function} $\phi : \R^n\to\R$ to be minimized, consider its quadratic perturbation in double variables
\begin{equation*}
    \Phi(y,x) := \phi(y) + \frac{1}{2\gamma}|x-y|^{2}
\end{equation*}
and the partial stochastic gradient descent  associated to it
\begin{equation}\label{eq: SDE motiv}
    \begin{aligned}
    \text{d}X_{s} & = -\nabla_{x}\Phi (Y_{s},X_{s})\,\text{d}s,\quad X_{0}=x\in\mathds{R}^{n}\\ 
    \text{d}Y_{s} & = -\frac{1}{\varepsilon}\nabla_{y} \Phi (Y_{s},X_{s})\,\text{d}s + \sqrt{\frac{2}{\varepsilon}}\beta^{-1/2}\,\text{d}W_{s}, \quad Y_{0}=y\in\mathds{R}^{n}.
    \end{aligned}
\end{equation}
The calculations in  \cite{chaudhari2018deep} show that 
one should expect the limit as $\varepsilon\to 0$ in the above system of SDEs to be
\begin{equation*}
    \text{d} \hat{X}_{s} = \int_{\mathds{R}^{n}} -\frac{1}{\gamma}(X_{s}-y)\rho^{\infty}_{_{\beta}}(\text{d}y;X_{s})\,\text{d}s,\quad \hat{X}_{0}=x\in\mathds{R}^{n} ,
\end{equation*}
where $\rho^{\infty}_{_{\beta}}(y;x)$ is the invariant (Gibbs) measure associated to the process $Y_{\cdot}$ ({ with $\varepsilon=1$ and frozen $X_s=x$}). The latter can be written as
\begin{equation*}
    \text{d} \hat{X}_{s} = -\nabla \phi_{\gamma}(\hat{X}_{s})\text{d}s,\quad \hat{X}_{0}=x\in\mathds{R}^{n} ,
\end{equation*}
that is the deterministic gradient descent of the regularized loss function
\begin{equation*}
    \phi_{\gamma}(x) \coloneqq -\frac{1}{\beta}\log\left(G_{\beta^{-1}\gamma}\ast\exp(-\beta \phi(x))\right)
\end{equation*}
where
\begin{equation*}
    G_{\beta^{-1}\gamma}(x) \coloneqq (2\pi\gamma)^{-n/2}\exp\left(-\frac{\beta}{2\gamma}\,|x|^{2}\right)
\end{equation*}
is the heat kernel, and $\beta,\gamma>0$ are parameters used to tune the algorithm. The function $\phi_{\gamma}$ above is called {\em local entropy}, and it is useful in the search of robust minima, because it measures both the depth and the flatness of the valleys in the landscape of the graph of $\phi$, see \cite{chaudhari2019entropy}. Note also that in \eqref{eq: SDE motiv} the drifts are $f=(y-x)/\gamma$ and $b=-\nabla\phi+(x-y)/\gamma$, which are unbounded in $x$ and $y$.

In the present paper, under rather general assumptions, we prove the convergence  { as $\varepsilon\to0$ of the value functions $V^\ep$ associated with the  the  singularly perturbed control system \eqref{eq: SDE eps intro} to a function $V(t,x)$ independent of $y$, and $V$ is characterized as the unique viscosity solution of a Cauchy problem for a limiting HJB equation. The  {\em effective Hamiltonian} $\bar H$ driving such equation and  the  {\em effective initial data} $\bar g$ are explicitly computed by suitable averages. In particular, the result applies to the model problem \eqref{eq: SDE motiv} if $\phi\in C^{1}(\mathds{R}^{n})$ is bounded from below and such that $\nabla\phi$ is Lipschitz continuous with constant $L$, and $\gamma$ is small enough ($\gamma < \frac{1}{L}$). This holds also if the equation for $X_{\cdot}$ in \eqref{eq: SDE motiv} involves a control $u_s$, e.g., it is of the form
\begin{equation}\label{modified}
\text{d}X_{s}  = - u_s \nabla_{x}\Phi(Y_{s},X_{s})
\end{equation}
where $u_s\in [0,1]$ is the {\em learning rate} of the SGD algorithm. This variant is used in the companion paper \cite{bardi2022deep} to prove that by \eqref{eq: SDE motiv} modified with \eqref{modified} one reaches in expectation a value of $\phi$ lower than the one got by classical stochastic gradient descent. In \cite{bardi2022deep} we also characterize explicitly the limiting system of controlled SDEs in $\R^n$ and prove results on the convergence of the trajectories of  \eqref{eq: SDE eps intro} to the trajectories of such effective system as $\varepsilon\to0$.} These results can be found also in the second author's thesis \cite{kouhkouh22phd}.

Our convergence theorem includes the previous results in \cite{bardi2011optimal,bardi2010convergence},  where the coefficients in the slow variable were assumed to be bounded with respect to the fast variables and mostly viscosity method for the HJB PDE were employed. However, some important parts of the proofs in \cite{bardi2011optimal,bardi2010convergence} do not work in the current setting. Here we use first a truncation to big balls of the cell problem, an HJB equation of  ergodic type that formally gives the effective Hamiltonian, and then use probabilistic estimates on the exit time $\tau^Y_n$ of the process
\begin{equation}
    \label{fast s}
    \text{d}Y_{t}= b(x,Y_{t})\,\text{d}t + \sqrt{2}\varrho(x,Y_{t})\,\text{d}W_{t} 
\end{equation}
from balls of radius $n$ as $n\to \infty$. This approach is new to our knowledge in the present context. Here some ideas are borrowed from \cite{herrmann2006transition}.

Our results also allow to generalise several applications of singular perturbations to finance, e.g., models of pricing and trading derivative securities in financial markets with stochastic volatility, as in \cite{bardi2010convergence, fouque2011multiscale}, applications in economics and advertising theory, as in \cite{bardi2011optimal}, and connections to large deviations as in \cite{feng2012small, spiliopoulos2013large, ghilli2018viscosity}.

There is a wide literature on singular perturbations for ODEs and control systems that goes back to the late 60's, see \cite{kokotovic1999singular} and the references therein, and for diffusion processes, with and without control, see \cite{kushner1990WeakCM, bardi2011optimal,bardi2010convergence} and their bibliographies. We mention also the series of papers \cite{pardoux2001poisson,pardoux2003poisson,pardoux2005poisson} by Pardoux and Veretennikov on the approximation of diffusions without control from the point of view of Poisson equation, and the contributions by  Borkar and Gaitsgory  \cite{borkar2007averaging, borkar2007singular} on singularly perturbed stochastic differential equations with control both in the slow and in the fast variables, relying on the {Limit Occupational Measure Set}.  
More recent results for uncontrolled SDEs were obtained in \cite{liu2020averaging, rockner2021averaging} under weaker regularity assumptions and in \cite{de2021averaging} for nonautonomus systems with an application in finance.
LQ problems with multiplicative noise were treated in \cite{goldys2020multiscale}.  
Some extensions to infinite dimensional control systems were studied in \cite{swikech2021singular} and \cite{guatteri2021singular}.
 Other results were obtained using different techniques from nonlinear filtering theory in \cite{athreya2021simultaneous}. The very recent paper \cite{ghilli2022rate} studies the rate of convergence in an unbounded setting.

The paper is organized as follows. In \textbf{Section \ref{sec: setting}} we present the two scale stochastic control problem and the assumptions that will hold throughout the paper, together with the associated Hamilton-Jacobi-Bellman equation. \textbf{Section \ref{sec: ergodic}} is devoted to the study of ergodicity properties of the process \eqref{fast s}, the estimates on $\tau^Y_n$, and the construction of the effective Hamiltonian and initial data and of suitable approximate correctors for the singularly perturbed HJB equation.
This is a crucial  step for the convergence result of the value function that we next show in \textbf{Section \ref{sec: conv val}}.  In this last section  we rely on viscosity methods, with  an adaptation of the Evans' perturbed test function method \cite{evans1989perturbed} to fit our unbounded context. 

\section{The two scale stochastic control problem}
\label{sec: setting}

\subsection{The stochastic system}
\label{sec:stochastic system}
Let $(\Omega,\mathcal{F},\mathcal{F}_{t},\mathds{P})$ be a complete filtered probability space and let $(W_{t})_{t}$ be an $\mathcal{F}_{t}$-adapted standard $r$-dimensional Brownian motion. We consider the following stochastic control system
\begin{equation}
\label{dynamics}
\left\{\;
\begin{aligned}
    \text{d}X_{t} &= f(X_{t},Y_{t},u_{t})\,\text{d}t + \sqrt{2}\,\sigma^{\varepsilon}(X_{t},Y_{t},u_{t})\,\text{d}W_{t},\quad X_{0} = x \in\mathds{R}^{n}\\
    \text{d}Y_{t} & = \frac{1}{\varepsilon}\,b(X_{t},Y_{t})\,\text{d}t + \sqrt{\frac{2}{\varepsilon}}\,\varrho(X_{t},Y_{t})\,\text{d}W_{t},\quad Y_{0}=y\in\mathds{R}^{m}
\end{aligned}
\right.
\end{equation}
Throughout the paper, we shall make different sets of assumptions that we will present when needed. We start with the following

 \textit{Assumptions \textbf{(A)}}
\begin{enumerate}[label=\textbf{(A\arabic*)}]
    \item For a given compact set $U$, $f:\mathds{R}^{n}\times\mathds{R}^{m}\times U \rightarrow \mathds{R}^{n}$, 
$\sigma^{\varepsilon}: \mathds{R}^{n}\times\mathds{R}^{m}\times U \rightarrow \mathds{M}^{n,r}$, 
$b:\mathds{R}^{n}\times\mathds{R}^{m}\rightarrow \mathds{R}^{m}$ and $\varrho:\mathds{R}^{n}\times\mathds{R}^{m}\rightarrow \mathds{M}^{m,r}$
 are continuous functions, Lipschitz continuous in $(x,y)$ uniformly with respect to $u\in U$ and $\varepsilon>0$, and with linear growth in both $x$ and $y$, that is,
\begin{equation}
\label{assumption-slow}
    |f(x,y,u)|,\|\sigma^{\varepsilon}(x,y,u)\|\leq C(1+|x|+|y|),\quad \forall\; x,y,\;\forall \varepsilon>0
\end{equation}
\begin{equation}
\label{assumption-fast}
    |b(x,y|,\|\varrho(x,y)\|\leq C(1+|x|+|y|),\quad \forall\; x,y,
\end{equation}
for some positive constant $C$.
\item The diffusion $\sigma^{\varepsilon}$ driving the slow variables $X_{t}$ satisfies
\begin{equation*}
    \lim\limits_{\varepsilon\rightarrow0}\sigma^{\varepsilon}(x,y,u) = \sigma(x,y,u)\quad\text{locally uniformly},
\end{equation*}
where $\sigma:\mathds{R}^{n}\times\mathds{R}^{m}\times U \rightarrow \mathds{M}^{n,r}$ satisfies the same conditions as $\sigma^{\varepsilon}$. We will not make any nondegeneracy assumption on the matrices $\sigma^{\varepsilon},\sigma$, so the cases $\sigma^{\varepsilon},\sigma\equiv0$ are allowed.
\item The diffusion $\varrho$ driving the fast variables $Y_t$ is such that $\varrho\varrho^{\top}$ is uniformly bounded and non degenerate, i.e. $\exists\; \underline{\Lambda}, \;\overline{\Lambda}\;>0,$ such that $\forall\;x,y,\xi.$
\begin{equation}
\label{non degen}
    \underline{\Lambda} |\xi|^{2}\leq \varrho(x,y)\varrho^{\top}(x,y)\xi\cdot\xi = |\varrho(x,y)^{\top}\xi|^{2}\leq \overline{\Lambda}|\xi|^{2}.
\end{equation}
\item The following \textit{recurrence condition} holds for the fast variables $Y_{\cdot}$
\begin{equation}
\label{recurrence condition}
    \forall\;x\in \mathds{R}^n, \exists\;A_{x},R_{x}>0 \; \text{ s.t.}\quad b(x,y)\cdot y < - A_{x} |y|,\quad \forall\;|y|\geq R_{x}.
\end{equation}
\end{enumerate}
We will simply denote $A=A_{x},R=R_{x}$ when there is no confusion. Note that condition \eqref{recurrence condition} is  related to the one introduced by Pardoux and Veretennikov in \cite{pardoux2001poisson}  namely, $\lim_{|y|\rightarrow\infty} \sup_{x\in\mathds{R}^{n}} b(x,y)\cdot y = -\infty$ uniformly in $x$, and usually called Khasminskii's assumption. It will be strengthened later into assumption (C2) to get better properties of the invariant measure and effective Hamiltonian.

\subsection{The optimal control problem}
\label{sec:OCP sys}
We define the following pay off functional for a finite horizon optimal control problem associated to system \eqref{dynamics} for $t\in[0,T]$
\begin{equation}
\label{cost function}
    J(t,x,y,u):=
    \mathds{E}\left[ 
    e^{\lambda(t-T)}g(X_{T},Y_{T}) + 
    \int_{t}^{T}\ell(s,X_{s},Y_{s},u_{s})e^{\lambda(s-T)}\text{d}s \; \bigg|\; X_{t}=x,\;Y_{t}=y
    \right].
\end{equation}
The associated value function is
\begin{equation}
    \label{value function}
    \tag{$\;OCP(\varepsilon)\;$}
    V^{\varepsilon}(t,x,y) := \sup\limits_{u\in\mathcal{U}}J(t,x,y,u),\quad \text{subject to }\; \eqref{dynamics}\,.
\end{equation}
The set of admissible control functions $\mathcal{U}$ is the standard one in stochastic control problems, i.e., 
the set of $\mathcal{F}_{t}$-progressively measurable processes taking values in $U$. We will make the following

\textit{Assumptions \textbf{(B)}}
\begin{enumerate}[label=\textbf{(B\arabic*)}]
    \item The discount factor is $\lambda\geq0$.
    \item The utility function $g:\mathds{R}^{n}\times\mathds{R}^{m}\rightarrow\mathds{R}$ and the running cost $\ell:[0,T]\times\mathds{R}^{n}\times\mathds{R}^{m}\times U\rightarrow\mathds{R}$ are continuous functions and satisfy
\begin{equation}
\label{assumption-cost}
    \exists\;K>0\;\text{ s.t. }\; |g(x,y)|,|\ell(s,x,y,u)|\leq K(1+|x|^{2} + |y|^{2}),\;\forall s\in[0,T], x,y.
\end{equation}
\item {The running cost $\ell$ is locally H\"older continuous in $y$ uniformly in $u$, i.e., for any $R>0$ and $s, x$ there are constants $\gamma, C>0$ such that
\begin{equation}
\label{assumption-cost2}
|\ell(s,x,y,u)-\ell(s,x,\tilde y,u)|\leq C |y-\tilde y|^\gamma \quad \forall \, |y|\leq R, u\in U.
\end{equation}  }
\end{enumerate}

\subsection{The HJB equation}
The HJB equation associated via Dynamic Programming to the value function $V^{\varepsilon}$ is
\begin{equation}
    \label{HJB}
    -V^{\varepsilon}_{t} + F^{\varepsilon}\left(t,x,y,V^{\varepsilon},D_{x}V^{\varepsilon},\frac{D_{y}V^{\varepsilon}}{\varepsilon}, D^{2}_{xx}V^{\varepsilon},\frac{D^{2}_{yy}V^{\varepsilon}}{\varepsilon}, \frac{D^{2}_{x,y}V^{\varepsilon}}{\sqrt{\varepsilon}}\right) = 0 , \; 
    \text{ in }\;(0,T)\times\mathds{R}^{n}\times\mathds{R}^{m},
\end{equation}
complemented with the obvious terminal condition
\begin{equation}
\label{terminal cond}
    V^{\varepsilon}(T,x,y)=g(x,y) .
\end{equation}
This is a fully nonlinear degenerate parabolic equation (strictly parabolic in the $y$ variables by the assumption \eqref{non degen}). We denote by $\mathds{M}^{n,m}$ (respec. $\mathds{S}^{n}$) the set of matrices of $n$ rows and $m$ columns (respec. the subset of $n$-dimensional squared symmetric matrices). The Hamiltonian $F^{\varepsilon}:[0,T]\times\mathds{R}^{n}\times\mathds{R}^{m}\times\mathds{R}\times\mathds{R}^{n}\times\mathds{R}^{m}\times\mathds{S}^{n}\times\mathds{S}^{m}\times\mathds{M}^{n,m}\rightarrow\mathds{R}$ is defined as
\begin{equation}
    \label{Ham F}
    F^{\varepsilon}(t,x,y,r,p,q,M,N,Z) := H^{\varepsilon}(t,x,y,p,M,Z) - \mathcal{L}(x,y,q,N)+\lambda r,
\end{equation}
where
\begin{equation}
    \label{H-eps}
    H^{\varepsilon}(t,x,y,p,M,Z) := \min\limits_{u\in U}\left\{ -\text{trace}(\sigma^{\varepsilon}\sigma^{\varepsilon\top}M) - f\cdot p - 2\text{trace}(\sigma^{\varepsilon}\varrho^{\top}Z^{\top}) - \ell  \right\},
\end{equation}
with $\sigma^{\varepsilon},f$   computed at $(x,y,u)$, $\ell=\ell(t,x,y,u)$, and $\varrho=\varrho(x,y)$, and
\begin{equation}
    \label{inf gen}
    \mathcal{L}(x,y,q,N) := b(x,y)\cdot q + \text{trace}(\varrho(x,y)\varrho^{\top}(x,y)N)
\end{equation}
We define also the Hamiltonian $H$ as $H^{\varepsilon}$ with $\sigma^{\varepsilon}$ is replaced by $\sigma$
\begin{equation}
    \label{H}
    H(t,x,y,p,M,Z) := \min\limits_{u\in U}\left\{ -\text{trace}(\sigma\sigma^{\top}M) - f\cdot p - 2\text{trace}(\sigma\varrho^{\top}Z^{\top}) - \ell  \right\}.
\end{equation}

The next result is standard, see, e.g., \cite[Proposition 3.1]{bardi2010convergence} or \cite[Proposition 2.1]{bardi2011optimal}. 
\begin{prop}
\label{prelimit sol}
 {Under assumptions (A) and (B1,B2)}, for any $\varepsilon>0$, the function $V^{\varepsilon}$ in \eqref{value function} is the unique continuous viscosity solution to the Cauchy problem \eqref{HJB}-\eqref{terminal cond} with at most quadratic growth in $x$ and $y$, i.e., $\exists\;K>0$ independent by $\varepsilon$   such that
\begin{equation}
\label{quadratic gro-V-eps}
    |V^{\varepsilon}(t,x,y)|\leq K(1+|x|^{2}+|y|^{2}),\quad \forall\;t\in [0,T],\;x\in\mathds{R}^{n},\;y\in\mathds{R}^{m}.
\end{equation}
\end{prop}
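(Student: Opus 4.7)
The plan is to treat this as a standard stochastic control fact, following the approach of \cite{bardi2010convergence,bardi2011optimal}, but with special attention to two points where the unbounded-data setting requires extra care: the $\varepsilon$-uniform quadratic bound on $V^\varepsilon$, and the uniqueness in the class of functions with quadratic growth. I would split the argument into four stages, as follows.

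\textbf{Step 1 (moment estimates uniform in $\varepsilon$).} The first thing to establish is that, for every admissible $u$,
\[
\mathds{E}\!\left[\sup_{t\leq s\leq T} \bigl(|X_s|^2+|Y_s|^2\bigr) \,\Big|\, X_t=x,Y_t=y\right] \leq C\bigl(1+|x|^2+|y|^2\bigr),
\]
with $C$ independent of $\varepsilon\in(0,1]$ and of $u\in\mathcal U$. For the slow component $X$ this is Gronwall applied to the Itô expansion of $|X_s|^2$, using the linear growth (A1) and the bound just obtained for $\mathds{E}|Y_s|^2$. The fast component is the delicate one: applying Itô to $1+|Y_s|^2$ one obtains a drift of order $\frac{1}{\varepsilon}$ which would explode as $\varepsilon\to 0$ unless one uses the recurrence condition \eqref{recurrence condition}. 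Writing $2Y_s\cdot b(X_s,Y_s)\leq -2A_{X_s}|Y_s|\mathds{1}_{\{|Y_s|\geq R_{X_s}\}}+C\mathds{1}_{\{|Y_s|< R_{X_s}\}}$, the dissipative contribution of the drift absorbs the $\frac{2}{\varepsilon}\|\varrho\|^2$ coming from the quadratic variation up to a constant, yielding a bound of the Lyapunov type $\frac{d}{ds}\mathds{E}|Y_s|^2 \leq \frac{C}{\varepsilon}(1+\mathds{E}|X_s|^2)$ after absorption. Coupling this with the $X$-equation and Gronwall gives the claim.

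\textbf{Step 2 (quadratic growth of $V^\varepsilon$).} Plugging these moment bounds into the payoff \eqref{cost function} and using (B2) yields
\[
|J(t,x,y,u)|\leq K'\bigl(1+\mathds{E}|X_T|^2+\mathds{E}|Y_T|^2\bigr)+K'\!\int_t^T\!\bigl(1+\mathds{E}|X_s|^2+\mathds{E}|Y_s|^2\bigr)ds \leq K(1+|x|^2+|y|^2)
\]
uniformly in $u$ and $\varepsilon$, whence \eqref{quadratic gro-V-eps} by taking the supremum over $u$.

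\textbf{Step 3 (existence as a viscosity solution and continuity).} This is by now classical: one first proves the dynamic programming principle for $V^\varepsilon$ via standard measurable-selection arguments on admissible controls (see, e.g., \cite{bardi2008optimal}), then uses a test-function argument with Itô's formula on the stopped process to verify that $V^\varepsilon$ is a viscosity sub- and supersolution of \eqref{HJB} with terminal condition \eqref{terminal cond}. Continuity in $(t,x,y)$ is obtained either directly from continuous dependence of the SDE \eqref{dynamics} on initial data (using again Step 1 moment bounds) or a posteriori from the comparison principle in Step 4.

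\textbf{Step 4 (uniqueness in the quadratic-growth class).} For the HJB operator $F^\varepsilon$ of \eqref{Ham F}-\eqref{H-eps}-\eqref{inf gen}, the coefficients $f,\sigma^\varepsilon,b,\varrho$ have linear growth and $\ell$ has quadratic growth in $(x,y)$, so the equation is of the critical type covered by the comparison results of Da Lio and Ley \cite{da2006uniqueness} for second-order Bellman--Isaacs equations with quadratic growth. One applies their comparison principle in the class of sub/supersolutions with at most quadratic growth to conclude uniqueness (and, as a byproduct, continuity). The $\varepsilon$ plays no role beyond entering as a fixed parameter in the coefficients.

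The \emph{main obstacle} is Step 1, and specifically the $\varepsilon$-uniformity of the $Y$-moment bound: the $\frac{1}{\varepsilon}\|\varrho\|^2$ term in the Itô expansion has the same order as the dissipative drift $\frac{1}{\varepsilon}Y\cdot b$, so one really must exploit the linear-in-$|y|$ strength of the recurrence condition \eqref{recurrence condition} (not merely $Y\cdot b\leq 0$) to absorb the diffusion and prevent exponential blow-up in $1/\varepsilon$. Once this estimate is in place, the remaining steps are essentially the same as in \cite{bardi2010convergence,bardi2011optimal}, with the only adaptation being the use of quadratic-growth comparison theorems in place of bounded/linear-growth ones.
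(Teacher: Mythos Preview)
The paper itself gives no proof of this proposition: it simply declares the result ``standard'' and refers to \cite[Proposition 3.1]{bardi2010convergence} and \cite[Proposition 2.1]{bardi2011optimal}. Your outline is therefore much more detailed than anything the authors provide, and Steps~2--4 (passing from moment bounds to growth of $V^\varepsilon$, DPP and viscosity property, comparison via Da~Lio--Ley) are the right ingredients and essentially what the cited references do.

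The gap is in Step~1, precisely at the point you flag as the main obstacle. From (A4) you obtain $2Y_s\cdot b(X_s,Y_s)\leq -2A|Y_s|$ for $|Y_s|\geq R$, and this does absorb the \emph{bounded} diffusion contribution $2\,\mathrm{tr}(\varrho\varrho^\top)\leq 2m\overline\Lambda$ for large $|Y_s|$. But once you ``absorb'' and discard the negative term, what remains is exactly the inequality you write,
\[
\frac{d}{ds}\,\mathds{E}|Y_s|^2 \;\leq\; \frac{C}{\varepsilon}\bigl(1+\mathds{E}|X_s|^2\bigr),
\]
and coupling this with the $X$-inequality and Gronwall produces a bound of order $1/\varepsilon$ (or $e^{C/\varepsilon}$), not a uniform one. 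The underlying reason is that (A4) gives only \emph{linear} dissipation $b\cdot y\lesssim -|y|$; the Lyapunov function $|y|^2$ then satisfies $\mathcal L_Y|y|^2\leq C-c|y|$, which is \emph{not} of the form $-c|y|^2+C$ needed to close a differential inequality uniformly in the fast scale. The quadratic dissipation $b\cdot y\lesssim -|y|^2$ that would make your argument go through is condition (C2), not (A4).

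To repair Step~1 under the stated hypotheses you cannot throw away the $-\tfrac{2A}{\varepsilon}\mathds E|Y_s|$ term; one has to keep it and argue via a different route (e.g.\ a time-change to the $\varepsilon=1$ fast process and Veretennikov-type polynomial moment estimates as in Lemma~\ref{conv-prob law}, or a PDE supersolution tailored to the two-scale structure). Alternatively, note that the uniform bound is only \emph{used} in the proof of Theorem~\ref{thm-conv-value}, where (C2) is in force, so one could also retreat to that stronger assumption. Either way, the sentence ``Gronwall gives the claim'' does not survive as written.
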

Note that  the functions $V^{\varepsilon}$ are locally equibounded but can be unbounded. The unboundedness in $y$ was not allowed in the previous literature on singular perturbations and it is the main difficulty and novelty in this paper.

\section{Ergodicity of the fast variables and { the effective limit problem}}
\label{sec: ergodic}

\subsection{The invariant measure}
\label{inv_meas}

Consider the diffusion processes in $\mathds{R}^{m}$ obtained by setting $\varepsilon=1$ in \eqref{dynamics} and freezing $x\in\mathds{R}^{n}$
\begin{equation}
    \label{fast subsys}
    \text{d}Y_{t}= b(x,Y_{t})\,\text{d}t + \sqrt{2}\varrho(x,Y_{t})\,\text{d}W_{t},\quad Y_{0}=y\in\mathds{R}^m
\end{equation}
called \textit{fast subsystem}. If we want to recall the dependence on the parameter $x$, we denote the process in \eqref{fast subsys} as $Y^{x}_{\cdot}$. Observe that its infinitesimal generator is $\mathcal{L}_{x}w := \mathcal{L}(x,y,D_{y}w,D^{2}_{yy}w)$ with $\mathcal{L}$ defined by \eqref{inf gen}. 
 
Let us recall that a probability measure $\mu_{x}$ on $\R^m$ is an {\it invariant measure} for the process $Y^{x}_{\cdot}$ in \eqref{fast subsys} if 
\begin{equation}
    \int_{\mathds{R}^{m}}\mathds{E}[f(Y_{t})\,|Y_0 = y]\,\text{d}\mu_{x}(y) = \int_{\mathds{R}^m}f(y)\,\text{d}\mu_{x}(y),\quad \forall\,t>0 ,
\end{equation}
for all bounded Borel functions $f$ in $\mathds{R}^m$ (see for example \cite{lorenzi2006analytical}). We recall that an invariant measure is a  stationary solution of the Fokker-Planck equation $\mathcal{L}_{x}^{*}\mu_{x} = 0$, where $\mathcal{L}_{x}^*$ is the adjoint operator to $\mathcal{L}_{x}$. When it exists and is unique we say that the process $Y^{x}_{\cdot}$ is ergodic.

It is well known that the assumption  \eqref{recurrence condition} on the drift ensures the existence of an invariant measure for \eqref{fast subsys}, and its uniqueness follows from the non-degeneracy assumption \eqref{non degen} on the diffusion $\varrho$. This is proven for instance in \cite{veretennikov1997polynomial} (see also \cite{pardoux2001poisson,pardoux2003poisson,pardoux2005poisson}). Another proof of existence and uniqueness of the invariant measure is in \cite{bardi2010convergence} assuming the existence of a  Lyapunov-type function, which is related to the recurrence condition \cite{bardi2011optimal}.

In  this section we drop the explicit dependence on the frozen $x$. Instead, we stress the dependence of $Y_{\cdot}$ on its  initial position $y$ by writing 
\begin{equation}
    \label{fast subsys - no x}
    \text{d}Y_{y}(t)=b(Y_{y}(t))\,\text{d}t + \sqrt{2}\varrho(Y_{y}(t))\,\text{d}W_{t},\quad Y_{y}(0)=y\in\mathds{R}^{m} .
\end{equation}

\subsection{Auxiliary results}
\label{aux}

The first result we need is the following lemma which  gives a stronger form of ergodicity of the fast subsystem, that is, the convergence of the probability law of $Y_{y}(\cdot)$ towards its unique invariant probability measure. We use $\|\mu-\nu\|_{TV}$ for the total variation distance between two probability measures $\mu,\nu$ defined by
\begin{equation*}
    \|\mu-\nu\|_{TV}= \sup\limits_{A\in\mathcal{B}}|\mu(A) - \nu(A)|
\end{equation*}
where $\mathcal{B}$ is the class of Borel sets. In particular, $\|\mu\|_{TV} = \int_{\mathds{R}^{m}}\text{d}\mu=1$. 
\begin{lem}
\label{conv-prob law}
Under  assumptions (A), there exists $C, d,k>0$ such that
\begin{equation}
\label{tv}
    \|\mathds{P}_{Y_{y}(t)}(\cdot)  -  \mu(\cdot)\|_{TV} \leq C(1+|y|^{d})(1+t)^{-(1+k)} .
\end{equation}
 Moreover, the invariant measure $\mu$ has finite moments of any order.
\end{lem}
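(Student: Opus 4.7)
The proof strategy combines two classical ingredients: a polynomial Lyapunov function derived from the recurrence condition (A4), and the polynomial mixing theorem of Veretennikov \cite{veretennikov1997polynomial}, refined in \cite{pardoux2001poisson}. Under our hypotheses both are directly applicable, so the heart of the argument is the verification of a polynomial drift inequality for the generator $\mathcal{L}$ of \eqref{fast subsys - no x}.

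First I would fix $p\geq 2$ and consider the test function $V_p(y) := (1+|y|^2)^{p/2}$. A direct calculation gives
\begin{equation*}
\mathcal{L}V_p(y) = p(1+|y|^2)^{p/2-1}\, b(y)\cdot y + p(1+|y|^2)^{p/2-1}\mathrm{tr}\bigl(\varrho\varrho^\top\bigr)(y) + p(p-2)(1+|y|^2)^{p/2-2}|\varrho(y)^\top y|^2.
\end{equation*}
By the non-degeneracy assumption (A3), the quantities $\mathrm{tr}(\varrho\varrho^\top)$ and $|\varrho^\top y|^2/|y|^2$ are bounded above by constants depending only on $\overline{\Lambda}$, so both diffusion terms are of order $|y|^{p-2}$ as $|y|\to\infty$. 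By the recurrence condition (A4), for $|y|\geq R$ the drift term is bounded above by $-pA|y|(1+|y|^2)^{p/2-1}$, which is of order $-|y|^{p-1}$ and therefore dominates the diffusion contribution for large $|y|$. Hence there exist constants $c_p, K_p>0$ such that
\begin{equation*}
\mathcal{L}V_p(y)\leq -c_p V_p(y)^{(p-1)/p} + K_p\, \mathbf{1}_{\{|y|\leq R\}}, \qquad y\in\mathds{R}^m.
\end{equation*}

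From this inequality I would first obtain the moment estimate $\sup_{t\geq 0}\mathds{E}[V_p(Y_y(t))]\leq C_p(1+|y|^p)$ via Dynkin's formula applied to $V_p$ with a localisation at exit times from large balls, and then deduce $\int V_p\, d\mu<\infty$ for every $p\geq 2$ by letting $t\to\infty$ and invoking Fatou's lemma together with the weak convergence $\mathds{P}_{Y_y(t)}\rightharpoonup \mu$. This yields the second assertion of the lemma. For the total variation bound I would then directly invoke Theorem~2 of \cite{veretennikov1997polynomial} (equivalently Theorem~1 of \cite{pardoux2001poisson}): the uniform non-degeneracy (A3) provides the minorisation on compact sets, the polynomial drift inequality above provides the required return-time estimates with all polynomial moments, and the conclusion of that theorem is precisely the estimate \eqref{tv}. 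The exponent $k$ can be made arbitrarily large by choosing $p$ sufficiently large, while $d$ scales linearly with $p$.

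The main technical obstacle I anticipate is the balance between drift and diffusion in the computation of $\mathcal{L}V_p$. If $\varrho\varrho^\top$ were merely of linear growth in $y$ as allowed by (A1) alone, the trace term would be of order $|y|^p$ and would overwhelm the drift contribution of order $-|y|^{p-1}$ from (A4), so the Lyapunov argument would collapse. The uniform upper bound in (A3) is therefore essential, and explains why this step cannot be weakened without strengthening the recurrence condition (which is the role of the later assumption (C2) alluded to in the paper).
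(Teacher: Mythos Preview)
Your proposal is correct and lands on the same result the paper uses, namely the polynomial mixing theorem of Veretennikov \cite{veretennikov1997polynomial}; the paper's own proof is in fact shorter because it simply observes that (A4) implies Veretennikov's hypothesis $b(y)\cdot y\leq -r$ for $|y|\geq M_0$ with $r$ arbitrarily large (since $b\cdot y<-A|y|$), and then cites Theorem~6 there for both the TV bound \eqref{tv} and the finiteness of moments (eq.~(28) of \S6). Your route differs only in that you unpack part of Veretennikov's internal argument: you explicitly build the polynomial Lyapunov function $V_p$, verify the drift inequality, and derive the moment bounds yourself via Dynkin and Fatou before invoking the mixing theorem for the TV decay. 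This is sound and arguably more informative, and your observation about the boundedness of $\varrho\varrho^\top$ in (A3) being essential to make the diffusion terms subdominant is exactly right; the paper's direct citation hides this point. The only cosmetic discrepancy is the theorem number you quote (Theorem~2 versus the paper's Theorem~6 of \cite{veretennikov1997polynomial}); you should double-check which statement in that reference matches the conclusion \eqref{tv}.
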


\begin{proof}
This is a particular case of the more general result in \cite[Theorem 6]{veretennikov1997polynomial}. Indeed, the main assumption in \cite{veretennikov1997polynomial} is
\begin{equation}
\label{assumption-ver}
    \exists\;M_{0}\geq 0,\; r\geq 0 \;\text{ s.t. }\quad b(y)\cdot y  \leq -r,\quad \forall\;|y|\geq M_{0} .
\end{equation}
Then, for the constants
\begin{equation*}
         \Tilde{\Lambda} := \sup_{y}
         {\text{trace}(\varrho\varrho^{\top}(y))}/{m}, \quad
         r_{0} := [r - (m\Tilde{\Lambda} - \underline{\Lambda})/2]\overline{\Lambda}^{-1} ,
\end{equation*}   
Theorem 6 in \cite{veretennikov1997polynomial} states that \eqref{tv} holds $\forall \; k\in (0,r_{0}-\frac{3}{2})$, $\forall\; d\in (2k+2, 2r_{0}-1)$ if  $r_{0}>\frac{3}{2}$. In our case, assumption \eqref{recurrence condition} guarantees a constant $r$, and therefore $r_{0}$,  as large as we want.

For the finite moments, see \cite[eq. (28) in \S 6]{veretennikov1997polynomial}, where it is shown that the invariant measure has finite moments of order $d\in(2k+2, 2r_{0}-1)$  if $k\in (0,r_{0}-\frac{3}{2})$. It is enough to use H\"older inequality together with the fact that $\mu(\mathds{R}^{m})=1$ to prove finite moments of any order $d\geq 1$.
\end{proof}

The following result gives an estimate on  the first exit time of $Y_{y}(\cdot)$ from the ball centered in $0$ with radius $n$
$$
\tau_{n}^{Y}:=\inf\left\{t \geq 0\;|\; \|Y_{y}(t)\|\geq n\right\} .
$$ 
It will be needed together with the previous Lemma for constructing the limit PDE in the next section. 
\begin{lem}
\label{conv-exit time}
Under assumptions (A), for any compact set $\mathcal K$, there exist $\eta,C$ positive constants and $\ell$ a positive function such that, for any $\delta\in (0,1)$ and for $n$ large enough,
\begin{equation}
\label{conv-exit time - 1}
    \mathds{E}\left[ e^{-\delta \tau_{n}^{Y}} \right] \leq C \frac{\ell(\delta)}{\delta}e^{-n\eta}, \quad \forall \, y\in \mathcal K ,
\end{equation} 
where $\ell(\delta)= 1 + O(\delta)$ when $\delta\to 0^{+}$. In particular for any $\alpha\geq 0$ and $\beta>0$, one has
\begin{equation}
    \mathds{E}\left[n^{\alpha}e^{-\frac{1}{n^{\beta}}\tau_{n}^{Y}}\right] \leq C n^{\alpha+\beta}e^{-n\eta}\,\longrightarrow 0 \;\text{as }\; n\to+\infty.
\end{equation}
\end{lem}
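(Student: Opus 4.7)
My plan is to combine an exponential Lyapunov function with an optional-stopping argument for Dynkin's formula applied to the discounted process $e^{-\delta t}\phi(Y_{y}(t))$. The factor $e^{-\eta n}$ on the right-hand side of \eqref{conv-exit time - 1} strongly suggests testing against a $\phi$ behaving like $e^{\eta|y|}$, since then $\phi(Y_{y}(\tau_{n}^{Y}))=e^{\eta n}$ on the event $\{\tau_{n}^{Y}<\infty\}$, while the $1/\delta$ factor will arise naturally from integrating the error term $\int_{0}^{\infty}e^{-\delta s}\,ds$ produced by Dynkin.

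First, I would fix $\phi\in C^{2}(\mathds{R}^{m};(0,\infty))$ that coincides with $y\mapsto e^{\eta|y|}$ outside a small ball, smoothing near the origin to remove the $C^{2}$-singularity of $|y|$. A direct computation of the first two derivatives of $e^{\eta|y|}$ together with the bounds $b(y)\cdot y \leq -A|y|$ from \eqref{recurrence condition} and $\varrho\varrho^{\top}\leq \ov\Lambda\,I$ from \eqref{non degen} yields, for $|y|\geq R$,
\begin{equation*}
\mathcal{L}\phi(y) \leq \eta e^{\eta|y|}\Bigl[-A+\eta\ov\Lambda+\frac{m\ov\Lambda}{|y|}\Bigr].
\end{equation*}
Choosing $\eta<A/(2\ov\Lambda)$ and $R'$ large enough keeps the bracket below $-A/4$, so $\mathcal{L}\phi \leq -\kappa\phi$ on $\{|y|\geq R'\}$ with $\kappa=A\eta/4>0$. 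Because $\phi$ and $\mathcal{L}\phi$ are bounded on the compact set $\{|y|\leq R'\}$, this produces the uniform inequality
\begin{equation*}
\mathcal{L}\phi(y)-\delta\phi(y)\leq K_{0}\,\mathds{1}_{\{|y|\leq R'\}}(y),\qquad y\in\mathds{R}^{m},\ \delta\in(0,1),
\end{equation*}
with $K_{0}>0$ independent of $\delta$.

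Applying It\^o's formula to $e^{-\delta t}\phi(Y_{y}(t))$ up to $t\wedge\tau_{n}^{Y}$ and taking expectations (the martingale part vanishes because $\nabla\phi$ is bounded on $\{|y|\leq n\}$), the previous inequality together with $\int_{0}^{\infty}e^{-\delta s}\,ds=1/\delta$ gives
\begin{equation*}
\mathds{E}\bigl[e^{-\delta(t\wedge\tau_{n}^{Y})}\phi(Y_{y}(t\wedge\tau_{n}^{Y}))\bigr]\leq \phi(y)+\frac{K_{0}}{\delta}.
\end{equation*}
For $n>R'$, path continuity of $Y_{y}$ and nondegeneracy of $\varrho$ force $|Y_{y}(\tau_{n}^{Y})|=n$ on $\{\tau_{n}^{Y}<\infty\}$, hence $\phi(Y_{y}(\tau_{n}^{Y}))=e^{\eta n}$ there. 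Keeping only that event on the left, letting $t\to\infty$ by monotone convergence and taking the supremum over $y\in\mathcal{K}$ yields
\begin{equation*}
e^{\eta n}\mathds{E}\bigl[e^{-\delta\tau_{n}^{Y}}\bigr]\leq \sup_{y\in\mathcal{K}}\phi(y)+\frac{K_{0}}{\delta} =\frac{K_{0}}{\delta}\,\ell(\delta),\qquad \ell(\delta):=1+\frac{\delta}{K_{0}}\sup_{\mathcal{K}}\phi,
\end{equation*}
which is exactly \eqref{conv-exit time - 1} with $\ell(\delta)=1+O(\delta)$. The second assertion is immediate: setting $\delta=n^{-\beta}$ gives $\ell(n^{-\beta})\to 1$ and $n^{\alpha}/\delta=n^{\alpha+\beta}$, so $\mathds{E}[n^{\alpha}e^{-\tau_{n}^{Y}/n^{\beta}}]\leq C\,n^{\alpha+\beta}e^{-\eta n}\to 0$.

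The main obstacle is the Lyapunov estimate itself: one must verify that $\eta>0$ can simultaneously be chosen strictly positive and small enough that the second-order contribution $\eta\ov\Lambda\phi$ is absorbed by the first-order drift contribution $A\phi$. This is exactly the point at which \eqref{recurrence condition} interacts with the upper bound in \eqref{non degen}: only the linear inward growth $-A|y|$ of the drift combined with the global boundedness of $\varrho\varrho^{\top}$ makes $e^{\eta|y|}$ dissipative at infinity, and losing either ingredient would destroy the balance. Everything else in the argument (smoothing at the origin, vanishing of the stochastic integral up to $\tau_{n}^{Y}$, monotone convergence as $t\to\infty$) is standard.
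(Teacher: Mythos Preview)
Your argument is correct and takes a genuinely different route from the paper's. The paper proceeds by comparison with a one-dimensional reflected process: it builds, via Skorokhod's lemma, a process $Z_{t}\geq\|Y_{t}\|$ driven by the martingale $M_{t}=\int_{0}^{t}\nabla h(Y_{s})^{\top}\varrho(Y_{s})\,dW_{s}$ with $h(y)=\|y\|$ outside a ball, shows $\tau_{n}^{Y}\geq\tau_{n}^{Z}$, and then makes $\Phi(Z_{t})e^{-\gamma\xi_{t}}$ a local martingale by solving the linear ODE $\Phi''-\eta\Phi'-\gamma\Phi=0$ on $[R,n]$ with $\Phi'(R)=0$, $\Phi(n)=1$; Doob's optional stopping yields $\mathds{E}[e^{-\delta\tau_{n}^{Z}}]\leq\Phi(R\vee\|y_{0}\|)$, and the explicit form of $\Phi$ produces the bound after a Taylor expansion of the roots $\lambda_{1},\lambda_{2}$. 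Your approach bypasses the reflected process and the ODE entirely, applying Dynkin's formula directly to $e^{-\delta t}e^{\eta|Y_{t}|}$ and absorbing the non-dissipative region into the single constant $K_{0}$. This is shorter and makes the constraint $\eta<A/\overline{\Lambda}$ (the same one the paper arrives at) transparent from the outset; the paper's approach, on the other hand, gives more explicit constants and illustrates a comparison-to-1D technique borrowed from \cite{herrmann2006transition} that is of independent interest. One small inaccuracy: the equality $|Y_{y}(\tau_{n}^{Y})|=n$ on $\{\tau_{n}^{Y}<\infty\}$ follows from path continuity alone; nondegeneracy of $\varrho$ is not needed there.
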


\begin{proof}
The idea of the proof is to build  a process $Z_{t}\in\mathds{R}$ such that $\|Y(t)\|\leq Z_{t}$ a.s.. Then one has $\tau_{n}^{Y}\geq \tau_{n}^{Z}$ a.s., where $\tau_{n}^{Z}:=\inf\left\{t \geq 0\;|\; |Z(t)|\geq n\right\}$, and hence 
\begin{equation}\label{comparison}
  \mathds{E}\left[e^{-\delta\tau_{n}^{Y}}\right] \leq \mathds{E}\left[e^{-\delta\tau_{n}^{Z}}\right],\quad \forall\;\delta>0.
\end{equation}
Once we will have such a process $Z$, we'll give an upper bound of the right hand side  in \eqref{comparison}.

The construction of $Z$ such that $\|Y_{t}\|\leq Z_{t}$ a.s. is inspired by the proof of \cite[Proposition 1.4]{herrmann2006transition}. Let $A$ and $R$ be the positive constants in the recurrence condition  \eqref{recurrence condition} and note that $R$ can be chosen as large as we want. Define $h:\mathds{R}^{m}\rightarrow \mathds{R}$ as a $C^{2}$ function such that $h(y) = \|y\|$ when $\|y\|\geq R$, and $h(y) < R$ otherwise. Next define
\begin{equation}
    Z_{t} := R\vee \|y_{o}\| +\sqrt{2} M_{t} - \eta \xi_{t} + L_{t} ,
\end{equation}
where $Y_{0}=y_{o}$, $\eta$ is a positive constant to be made precise,
$$
M_{t}:=\int_{0}^{t}\nabla h(Y_{s})^{\top}\varrho(Y_{s})dW_{s}, 
\quad t\geq 0,
$$
$$
\xi_{t}:=\int_{0}^{t}\|\nabla h(Y_{s})^{\top}\varrho(Y_{s})\|^{2}ds\;
$$
 is the quadratic variation of the continuous local martingale $M_{t}$, and $L_{t}$ is an increasing process (of finite variation) which increases only at times $t$ for which $Z_{t}=R$, and is of zero value when $Z>R$.
Such pair $(Z,L)$ is the unique pair of continuous adapted process given by Skorokhod's lemma (see e.g. \cite[chap.VI, \S 2]{revuz2013continuous}): $Z$ is a process reflected  out of the interval $]-R, R[$ and $L$ its compensator. 
Note that when $\|y\|\geq R$, $\nabla h(y) = \frac{y}{\|y\|}$ so
$\left\|\nabla h(y)^{\top}\varrho(y)\right\|^{2} = \frac{1}{\|y\|^{2}} y^{\top}\varrho(y)\varrho(y)^{\top}y\leq\overline{\Lambda}$
by \eqref{non degen}, and hence $d\xi_{t} \leq \overline{\Lambda}\, dt$ on $\{\|Y_{t}\|\geq R\}$. On the other hand, define $\Tilde{K}:=\sup\limits_{\|y\|\leq R}\|\nabla h(y)\|^{2}$. Then we have $\xi_{t}\leq (1\vee \Tilde{K})\,\overline{\Lambda}\, t$ for all $t\geq 0$. We set $K:= (1\vee \Tilde{K})\overline{\Lambda}$, and get
\begin{equation}
    \label{K for xi}
    0\leq\xi_t\leq Kt,\quad\forall\; t\geq 0 .
\end{equation}
Now we choose $f\in C^{2}(\mathds{R})$ such that
\begin{equation*}
    \begin{aligned}
    &f(x)>0 \quad \text{and} \quad f'(x) >0, & \forall \; x>0\\
    &f(x)=0, & \forall \; x\leq 0
    \end{aligned}
\end{equation*}
We set $a(y):=\varrho(y)\varrho(y)^{\top}$. According to It\^o's formula, for $t\geq 0$,
\begin{equation*}
    \begin{aligned}
        \text{d}h(Y_{s}) & = \left(\nabla h(Y_{s})^{\top}b(Y_{s})  + \text{trace}\left(a(Y_{s})D^{2} h(Y_{s})\right)\right)\text{d}s + \sqrt{2}\nabla h(Y_{s})^{\top}\varrho(Y_{s})\text{d}W_{s} \\
         \text{d}Z_{s} & = -\eta d\xi_{s} + \text{d}L_{s} + \sqrt{2}\text{d}M_{s}\\
        & = -\eta\left\|\nabla h(Y_{s})^{\top}\varrho(Y_{s})\right\|^{2}\text{d}s + \text{d}L_{s} + \sqrt{2}\nabla h(Y_{s})^{\top}\varrho(Y_{s})\text{d}W_{s}
    \end{aligned}
\end{equation*}
so that
\begin{equation*}
    \text{d}\left(h(Y)-Z\right)_{s} = \left(\nabla h(Y_{s})^{\top}b(Y_{s})  + \text{trace}\left(a(Y_{s})D^{2} h(Y_{s})\right) + \eta\left\|\nabla h(Y_{s})^{\top}\varrho(Y_{s})\right\|^{2} \right)\text{d}s  - \text{d}L_{s} .
\end{equation*}
Again by It\^o's formula we obtain 
\begin{equation*}
    \begin{aligned}
    f(h(Y_{t})-Z_{t}) =& f(h(y_{o}) - R\vee\|y_{o}\|) + \int_{0}^{t}f'(h(Y_{s})-Z_{s})\text{d}(h(Y)-Z)_{s}+\\
    & \quad \quad \quad + \frac{1}{2}\int_{0}^{t}f''(h(Y_{s})-Z_{s})\text{d}\langle h(Y) - Z \rangle_{s},
    \end{aligned}
\end{equation*}
where $\langle\zeta\rangle_{t}=\int_{0}^{t}\sigma(\zeta_{s})\sigma^{\top}(\zeta_{s})\text{d}s$ denotes the quadratic variation of a process defined by $\text{d}\zeta_{t} = f(\zeta_{t})\text{d}t + \sigma(\zeta_{t})\text{d}W_{t}$.

Note that  $ f(h(y_{o}) - R\vee\|y_{o}\|)=0$ by definition of $h$ and $f$. Moreover, $h(Y_{\cdot})-Z_{\cdot}$ is a continuous process with no Wiener process term, and hence it has zero quadratic variation, i.e., $\text{d}\langle h(Y) - Z \rangle_{s}=0$.
Now, again by definition of $h$ and $Z$, we have $h(Y_{t})\leq Z_{t}$ on $\{\|Y_{t}\|\leq R\}$, so $\{h(Y_{t})>Z_{t}\}=\{\|Y_{t}\|>Z_{t}\}$ is a subset of $\{\|Y_{t}\|> R\}$. 
When $\|y\|\geq R$, we have $\nabla h(y) = \frac{y}{\|y\|}$, $D^{2}h(y) = \frac{1}{\|y\|}\left(\mathds{I}_{m} - \frac{y\otimes y}{\|y\|^{2}}\right)$, and we compute, by \eqref{non degen},
\begin{equation}
    \text{trace}\left(a(y)D^{2}h(y)\right)  = \frac{1}{\|y\|}\left(\text{trace} \, a(y) - \sum\limits_{i,j=1}^{m}a_{ij}(y)\frac{y_{i}y_{j}}{\|y\|^{2}}\right)      \leq \frac{m\overline{\Lambda} }{\|y\|}.
\end{equation}
Hence the expression
\begin{equation*}
    \begin{aligned}
    \int_{0}^{t}f'(\|Y_{s}\|-Z_{s})\left\{ \frac{1}{\|Y_{s}\|}Y_{s}\cdot b(Y_{s}) + \frac{m}{\|Y_{s}\|}\overline{\Lambda} + \eta\overline{\Lambda}\, \right\}\text{d}s -\int_{0}^{t}f'(\|Y_{s}\|-Z_{s})\text{d}L_{s},
    \end{aligned}
\end{equation*}
which is valid  for $\|Y_{s}\|> R$, is an upper bound of $f(h(Y_{t})-Z_{t})$. Furthermore, $\text{d}L_{s}=0$ for $\|Y_{s}\|> R$, and therefore one has
\begin{equation}
\label{est_above}
    f(h(Y_{t})-Z_{t}) \leq \int_{0}^{t}f'(\|Y_{s}\|-Z_{s})\left\{ \frac{1}{\|Y_{s}\|} Y_{s}\cdot b(Y_{s}) + \frac{m }{\|Y_{s}\|}\overline{\Lambda} + \eta\overline{\Lambda}\, \right\}\text{d}s
\end{equation}
By  the recurrence condition \eqref{recurrence condition} the quantity in brackets $\left\{...\right\}$ is bounded from above by
\begin{equation*}
    \begin{aligned}
    -A + \frac{m}{\|Y_{s}\|}\overline{\Lambda} + \eta\overline{\Lambda} & \leq -A + \frac{m}{R}\overline{\Lambda} + \eta\overline{\Lambda}\, .
    \end{aligned}
\end{equation*}
because  this upper bound is obtained for $\|Y_{s}\|> R$. Now we choose $R$ large enough so that  $A/\overline{\Lambda}> m/R$. Then for 
\begin{equation*}
   0< \eta < \frac{A}{\overline{\Lambda}}-\frac{m}{R}
\end{equation*}
the r.h.s. of \eqref{est_above}  is negative,
which ensures
 $f(h(Y_{t})-Z_{t})\leq 0$ and implies $\|Y_{t}\|\leq Z_{t}$ a.s. by definition of $f$. 

Next we look for an upper bound to $\mathds{E}\left[e^{-\delta \tau_{n}^{Z}}\right]$. For simplicity of notation, in this step we shall write $\tau_{n}:= \tau_{n}^{Z}$, dropping the dependence on $Z$. Fix $\delta\in (0,1)$ and  set $\gamma := \frac{\delta}{K}$ where $K$ is the constant in \eqref{K for xi}. By It\^o's formula, for any $\Phi \in C^{2}(\mathds{R})$,
\begin{equation*}
\begin{aligned}
    \text{d}(\Phi(Z_{t})e^{-\gamma \xi_{t}}) 
    = &\;  \sqrt{2}\Phi'(Z_{t})e^{-\gamma \xi_{t}}\text{d}M_{t} + \Phi'(Z_{t})e^{-\gamma \xi_{t}}\text{d}L_{t}\\
    & \quad \quad \quad \quad  + e^{-\gamma \xi_{t}}\left\{ \Phi''(Z_{t}) - \eta\Phi'(Z_{t}) - \gamma \Phi(Z_{t}) \right\}\text{d}\xi_{t}.
\end{aligned}
\end{equation*}
Since we are interested in the limit as $n\to \infty$, we can assume without loss of generality that $n> R$. We choose $\Phi$ such that
\begin{equation}
\label{differential equ}
\left\{\;
    \begin{aligned}
        & \Phi''(z) - \eta \Phi'(z) - \gamma \Phi(z)=0, & \text{for }\; z\in [R,n]\\
        & \Phi'(R)=0 \quad \text{ and } \quad  \Phi(n) = 1& 
    \end{aligned}
\right.
\end{equation}
then $\Phi(Z_{t})e^{-\gamma \xi_{t}}$ is a local martingale which is bounded up to time $\tau_{n}$. Hence we are allowed to apply Doob's stopping theorem to obtain
\begin{equation}
    \Phi(R\vee \|y_{o}\|) = \mathds{E}\left[\Phi(Z_{\tau_{n}})e^{-\gamma \xi_{\tau_{n}}}\right]
\end{equation}
and since $Z_{\tau_{n}}=n$, $\Phi(n)=1$, and $\xi_{t}\leq Kt$ for all $t\geq 0$, we have
\begin{equation}
\label{doob-K}
     \mathds{E}\left[e^{-\gamma K\tau_{n}}\right] \leq \mathds{E}\left[e^{-\gamma \xi_{\tau_{n}}}\right] = \Phi(R\vee \|y_{o}\|)
\end{equation}
which yields
\begin{equation}
\label{doob}
     \mathds{E}\left[e^{-\delta \tau_{n}}\right] \leq \Phi(R\vee \|y_{o}\|)
\end{equation}
Now solving the differential equation \eqref{differential equ} yields
\begin{equation}
\label{sol-H}
    \Phi(z) = 
    \frac{-\lambda_{2}e^{\lambda_{1}(z-R)} + \lambda_{1}e^{\lambda_{2}(z-R)}}{-\lambda_{2}e^{\lambda_{1}(n-R)} + \lambda_{1}e^{\lambda_{2}(n-R)}} ,
\end{equation}
where $\lambda_{2}<0<\lambda_{1}$ are given by
\begin{equation*}
    \lambda_{1} = \frac{1}{2}\left(\eta + \sqrt{\eta^{2}+4\gamma}\right),\quad \quad \lambda_{2} = \frac{1}{2}\left(\eta - \sqrt{\eta^{2} + 4\gamma}\right) .
\end{equation*}
Hence,
\begin{equation*}
    \begin{aligned}
    \Phi(z) & \leq \frac{(\lambda_{1} - \lambda_{2})e^{\lambda_{1}(z-R)}}{-\lambda_{2}e^{\lambda_{1}(n-R)}}\\
        & \leq 2 \frac{\sqrt{1+\frac{4\gamma}{\eta^{2}}}}{\sqrt{1+\frac{4\gamma}{\eta^{2}}} - 1}\text{exp}\left[z \,  \frac{\eta}{2}\left(1+\sqrt{1+\frac{4\gamma}{\eta^{2}}}\right)\right]\text{exp}\left[-n\frac{\eta}{2}\left(1+\sqrt{1+\frac{4\gamma}{\eta^{2}}}\right)\right] 
    \end{aligned}
\end{equation*}
By Taylor expansion, for $\gamma$ small, 
\begin{equation*}
	\begin{aligned}
	1 + 2\frac{ \gamma}{\eta^{2}} - 2 \frac{\gamma^{2}}{\eta^{4}} \leq \sqrt{1+\frac{4\gamma}{\eta^{2}}}  \leq 1 + 2\frac{ \gamma}{\eta^{2}} ,
	\end{aligned}
\end{equation*}
which yields 
\begin{equation*}
	\Phi(z) \leq \frac{1+2\frac{\gamma}{\eta^{2}}}{\frac{\gamma}{\eta^{2}}-\frac{\gamma^{2}}{\eta^{4}}  }\text{exp}\left[z \, \eta\left(1+\frac{\gamma}{\eta^{2}}\right)\right]e^{-n\frac{\eta}{2}}.
\end{equation*}
Now recall that $\gamma := \frac{\delta}{K}$ and define
\begin{equation*}
    \ell(\delta) := \frac{1+\frac{2\delta}{K\eta^{2}}}{1-\frac{\delta}{K\eta^{2}}  }\text{exp}\left[z\frac{\delta}{K\eta}\right] \quad \text{and }\; C:=\eta^{2}Ke^{z\eta} .
\end{equation*}
Then the right-hand side in the last inequality equals $C\frac{\ell(\delta)}{\delta}e^{-n\frac{\eta}{2}}$ and  $\ell(\delta) = 1+O(\delta)$ when $\delta\to 0^{+}$. Together with \eqref{doob}, for $z:=R\vee\|y_{o}\|$ this  yields
\begin{equation*}
    \mathds{E}\left[e^{-\delta {\tau_{n}^Z}}\right] \leq C\frac{\ell(\delta)}{\delta}e^{-n\frac{\eta}{2}} .
\end{equation*}
By combining this  inequality with  \eqref{comparison}   we finally get the desired estimate \eqref{conv-exit time - 1} and  conclude the proof of the first statement. 

The second statement of the lemma is immediately obtained by multiplying the inequality \eqref{conv-exit time - 1} by $n^{\alpha}$ for $\alpha\geq0$ and choosing $\delta=n^{-\beta}$ for $\beta>0$. 
\end{proof}

\begin{rem}
\label{lem: exit time - quant}
 {Under  assumptions (A),} we can also prove that, for suitable $C_{1},C_{2}$ and $\kappa>0$,  
\begin{equation}
    C_{2}\,(n^{2}-|y|^{2})\,\leq \mathds{E}[\,\tau_{n}^Y\,]\,\leq \, C_{1}\,e^{\kappa\,n^{2}} \quad \text{locally uniformly in } y .
\end{equation}
\end{rem}

\subsection{The effective Hamiltonian and approximate correctors}\label{seq: eff ham}

We expect that the effective Hamiltonian in the limit HJB equation of the singular perturbation problem is
\begin{equation}
    \label{eff-Hamiltonian}
    \overline{H}({t},{x},{p},{P}) :=  \int_{\mathds{R}^{m}} H({t},{x},y,{p},{P},0)d\mu_{x}(y)
\end{equation}
where $\mu_x$ is the invariant measure of the process \eqref{fast subsys} introduced in Section \ref{inv_meas} and studied in Section \ref{aux}. In classical periodic homogenization theory one proves the convergence by means of a corrector (see \cite{lions1987homogenization, evans1989perturbed, alvarez2002viscosity}), namely, a (periodic) solution  $\chi$  {(for fixed $(t,x,p,P)$)} of the  \textit{cell problem}
\begin{equation*}
    -\mathcal{L}({x},y,D\chi,D^{2}\chi) + H({t},{x},y,{p},{P},0) = \ov H \quad \text{in }\; \mathds{R}^{m},
\end{equation*}
where $\mathcal{L}$ and $H$ are defined in \eqref{inf gen} and \eqref{H}. In many cases, however, the cell problem may be hard or impossible to solve, and then one resorts to   \textit{approximate correctors}, i.e., a sequence $\chi_n$ such that 
\begin{equation*}
    -\mathcal{L}({x},y,D\chi_n,D^{2}\chi_n) + H({t},{x},y,{p},{P},0) \to  \ov H \quad \text{locally uniformly, }
\end{equation*}
see, e.g., \cite{alvarez2003singular, bardi2011optimal}. Here  the unboundedness of both the domain and the Hamiltonian does not allow us to build the approximate correctors globally. We overcome the problem by introducing a suitable
\textit{truncated} $\delta$\textit{-cell problem} that we now describe.
Fix $(t, {x},{p},{P})$, and let us denote for simplicity 
\begin{equation}
   \mathcal{L}\omega(y)  := \mathcal{L}(x,y,D\omega,D^{2}\omega) ,
\end{equation}
\begin{equation}
    h(y) := H({t},{x},y,{p},{P},0)\quad \text{in }\mathds{R}^m .
\end{equation}
Note that $h$ is locally H\"older continuous by the assumptions (A) and (B).

Take a sequence of bounded and open domains $D_{n}$ such that $ \ov D_{n}\subset D_{n+1}$  and $\cup_nD_{n}=  \mathds{R}^{m}$.  Assume in addition that $\partial D_n$ is  $C^{2}$ and  $D_{n}\subseteq B(0,n):=\{y\in\mathds{R}^{m}\;|\; \|y\|< n\}$, the open ball centered in $0$ with radius $n$ (e.g., $D_n=B(0,n)$). Consider  the Dirichlet-Poisson problem
\begin{equation}
\label{dirichlet-Dn}
\left\{
    \begin{aligned}
        \;\delta u(y) -\mathcal{L}u(y) &= -h(y) ,\; \text{in } \; D_{n} ,&\\
        u(y) & = 0,\; \text{on } \partial D_{n} .&
    \end{aligned}
\right.
\end{equation}
It has a unique solution $u^{\delta,n}(\cdot)$ (see, e.g., \cite[Theorem 8.1, p.79]{mao2007stochastic}) given by
\begin{equation}
\label{sol dirichlet sto}
    u^{\delta,n}(y) =  \mathds{E}\left[-\int_{0}^{\tau_{n}} h(Y_{y}(t))e^{-\delta t} \text{d}t\right]
\end{equation}
where $\tau_{n}$ is the first exist time of $Y_{y}(\cdot)$ from $D_{n}$. 
In the next result we study  the limit as $\delta\to 0$ and $n\to \infty$.  We will use that, under the assumptions \eqref{assumption-slow} and \eqref{assumption-cost}, the Hamiltonian has at most a quadratic growth in $y$, i.e.
\begin{equation}
\label{growth h}
    \exists\; K_{h}>0,\quad |h(y)|\leq K_{h}(1+|y|^{2}),\quad \forall\;y\in\mathds{R}^{m}
\end{equation}
where $K_{h}$ is a constant that depends on the slow dynamics data $(f,\sigma)$ and the running cost $\ell$.

\begin{prop}
\label{eff ham}
Let $u^{\delta,n}(\cdot)$ be the solution to \eqref{dirichlet-Dn}. Under assumptions (A) and (B), for any  $\alpha>0$  and $\delta=\delta(n) = O\left(\frac{1}{n^{4+\alpha}}\right)$ as $n\to\infty$, 
\begin{equation}
\label{boundary-phi}
    \begin{aligned}
     \lim\limits_{n\rightarrow\infty}\left|\delta(n) u^{\delta(n),n}(y) + \mu(h)\right|  =0,\quad \text{ locally uniformly in } y,
    \end{aligned}
\end{equation}
where $\mu(h) = \int_{\mathds{R}^{m}}h(y)d\mu(y)= \overline{H}({t},{x},{p},{P})$ and $\mu$ is the unique invariant probability measure for the process \eqref{fast subsys}.
\end{prop}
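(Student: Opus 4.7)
The plan is to compare the truncated problem \eqref{dirichlet-Dn} with the Poisson equation on all of $\mathds{R}^{m}$, whose solution admits a transparent probabilistic representation and can be analyzed via the ergodic Lemma \ref{conv-prob law}. First I would introduce
$u^{\delta}(y):=-\mathds{E}\int_{0}^{\infty}h(Y_{y}(t))e^{-\delta t}\,\text{d}t,$
which is well-defined under assumption (A) thanks to the quadratic bound \eqref{growth h} on $h$ and uniform-in-$t$ moment estimates for $Y_{y}(\cdot)$ inherited from the recurrence condition \eqref{recurrence condition} (via the same reflected-process argument used in Lemma \ref{conv-exit time}). This $u^{\delta}\in C^{2}(\mathds{R}^{m})$ and solves $\delta u^{\delta}-\mathcal{L}u^{\delta}=-h$ on $\mathds{R}^{m}$. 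Applying Dynkin's formula to $u^{\delta}$ between $0$ and $\tau_{n}$ and comparing with \eqref{sol dirichlet sto} yields the key identity
\[u^{\delta,n}(y)=u^{\delta}(y)-\mathds{E}\bigl[e^{-\delta\tau_{n}}u^{\delta}(Y_{y}(\tau_{n}))\bigr],\]
so the proof reduces to the two separate limits $\delta u^{\delta}(y)\to-\mu(h)$ (bulk) and $\delta\,\mathds{E}\bigl[e^{-\delta\tau_{n}}u^{\delta}(Y_{y}(\tau_{n}))\bigr]\to0$ (boundary), for $\delta=\delta(n)=O(n^{-4-\alpha})$.

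For the bulk term, I would write $\delta u^{\delta}(y)+\mu(h)=-\delta\int_{0}^{\infty}\bigl(\mathds{E}[h(Y_{y}(t))]-\mu(h)\bigr)e^{-\delta t}\,\text{d}t$ and show that the integrand vanishes as $t\to\infty$ locally uniformly in $y$ while remaining locally bounded in $t$; a standard Abel--Tauberian argument then concludes. Because the TV bound \eqref{tv} alone cannot control the polynomially-growing $h$, I would split $h=h\mathds{1}_{\{|z|\le R\}}+h\mathds{1}_{\{|z|>R\}}$, apply \eqref{tv} to the bounded part, and handle the tail by the finite moments of $\mathds{P}_{Y_{y}(t)}$ (uniform in $t$) and of $\mu$ (Lemma \ref{conv-prob law}), letting $R\to\infty$ after $t\to\infty$.

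For the boundary term, the polynomial growth \eqref{growth h} together with the uniform second-moment estimate on $Y_{z}(\cdot)$ gives $|u^{\delta}(z)|\le C\delta^{-1}(1+|z|^{2})$. Since $Y_{y}(\tau_{n})\in\partial D_{n}\subset B(0,n)$,
\[\bigl|\delta\,\mathds{E}\bigl[e^{-\delta\tau_{n}}u^{\delta}(Y_{y}(\tau_{n}))\bigr]\bigr|\le C(1+n^{2})\,\mathds{E}\bigl[e^{-\delta\tau_{n}}\bigr],\]
and Lemma \ref{conv-exit time} yields an upper bound of order $(1+n^{2})\ell(\delta)\delta^{-1}e^{-n\eta}$; the calibration $\delta(n)=O(n^{-4-\alpha})$ then produces a bound of order $n^{6+\alpha}e^{-n\eta}\to0$. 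The main obstacle is the bulk step, where the polynomial growth of $h$ forces the truncation-plus-moments argument instead of a direct use of \eqref{tv}; once this is in place, the boundary step and the precise rate $n^{-4-\alpha}$ for $\delta(n)$ fit together routinely, the exponent $4$ being dictated by the product of the $\delta^{-1}$ in the exit-time estimate with the $\delta^{-1}$ in the bound for $u^{\delta}$ and the factor $(1+n^{2})$.
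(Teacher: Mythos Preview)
Your argument is sound and reaches the conclusion, but the decomposition is genuinely different from the paper's. The paper never introduces the whole-space resolvent $u^{\delta}$; it works directly with \eqref{sol dirichlet sto} and writes
\[
u^{\delta,n}(y)+\frac{\mu(h)}{\delta}
= \int_{0}^{\infty}\!\!\int_{D_{n}} h\,\mathrm{d}(\mu-\mathds{P}_{Y_{y}(t)})\,e^{-\delta t}\mathrm{d}t
+\frac{1}{\delta}\int_{D_{n}^{c}}h\,\mathrm{d}\mu
+\mathds{E}\!\int_{\tau_{n}}^{\infty}\mathds{1}_{D_{n}}(Y_{y}(t))h(Y_{y}(t))e^{-\delta t}\mathrm{d}t .
\]
On $D_{n}$ one has $|h|\le K_{h}(1+n^{2})$, so the TV bound of Lemma~\ref{conv-prob law} applies to a \emph{bounded} integrand; a H\"older inequality in $t$ then gives the first term a bound of order $\sqrt{\delta}\,(1+n^{2})$ after multiplication by $\delta$, and this is where the constraint $\delta=o(n^{-4})$ actually originates. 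The third term is bounded by $(1+n^{2})\,\mathds{E}[e^{-\delta\tau_{n}}]$ and dispatched by Lemma~\ref{conv-exit time}. The advantage of the paper's route is that it uses only Lemmas~\ref{conv-prob law} and~\ref{conv-exit time} as stated; your route is conceptually cleaner and, via Abel--Tauberian, the bulk term converges for any $\delta\to 0$, but it requires an additional input not proved in the paper, namely locally uniform-in-$y$ bounds $\sup_{t\ge 0}\mathds{E}\bigl[|Y_{y}(t)|^{2+}\bigr]<\infty$, both to define $u^{\delta}$ and to run your truncation-plus-moments argument for $\mathds{E}[h(Y_{y}(t))]\to\mu(h)$. (The $C^{2}$ regularity of $u^{\delta}$ is not actually needed for your identity; the strong Markov property alone gives it.) One small correction: your accounting for the exponent $4$ is off. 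After multiplying by $\delta$, the factor $\delta^{-1}$ in $|u^{\delta}|\le C\delta^{-1}(1+|z|^{2})$ cancels, leaving $(1+n^{2})\,\mathds{E}[e^{-\delta\tau_{n}}]\le C(1+n^{2})\ell(\delta)\delta^{-1}e^{-n\eta}$; since the exponential beats any polynomial, your boundary term only prevents $\delta$ from being exponentially small and does not by itself force $\delta=O(n^{-4-\alpha})$.
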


\begin{proof}
From \eqref{sol dirichlet sto}, for $D_{n}^{c}:=\mathds{R}^{m}\setminus D_{n}$, we have 
\begin{equation*}
    \begin{aligned}
        & u^{\delta,n}(y) + \frac{\mu(h)}{\delta} \\& = \mathds{E}\left[-\int_{0}^{\tau_{n}} h(Y_{y}(t))e^{-\delta t} \text{d}t\right] + \int_{0}^{\infty}\int_{\mathds{R}^{m}}h(y)e^{-\delta t}d\mu(y)\text{d}t 
        \\
        &=\mathds{E}\left[-\int_{0}^{\infty}\mathds{1}_{D_{n}}(Y_{y}(t))h(Y_{y}(t))e^{-\delta t} \text{d}t\right] + \int_{0}^{\infty}\int_{\mathds{R}^{m}}h(y)e^{-\delta t}d\mu(y)\text{d}t \\
        & \quad \quad \quad \quad \quad \quad \quad \quad + \mathds{E}\left[\int_{\tau_{n}}^{\infty}\mathds{1}_{D_{n}}(Y_{y}(t))h(Y_{y}(t))e^{-\delta t} \text{d}t\right] 
        \\
        & = \int_{0}^{\infty}\int_{D_{n}}h(y)d\bigg(\mu(y)-\mathds{P}_{Y_{y}(t)}(y)\bigg)e^{-\delta t}\text{d}t + \frac{1}{\delta}\int_{D_{n}^{c}}h(y)d\mu(y)\\
        & \quad \quad \quad \quad \quad \quad \quad \quad + \mathds{E}\left[\int_{\tau_{n}}^{\infty}\mathds{1}_{D_{n}}(Y_{y}(t))h(Y_{y}(t))e^{-\delta t} \text{d}t   \right] .
    \end{aligned}
\end{equation*}
To estimate the first term we apply first H\"older inequality to get 
\begin{equation*}
    \begin{aligned}
        &\left|\int_{0}^{\infty}\int_{D_{n}}h(y)\text{d}\bigg(\mu(y)-\mathds{P}_{Y_{y}(t)}(y)\bigg)e^{-\delta t}\text{d}t \right|\\
        & \quad \quad \quad \quad \leq \left(\int_{0}^{\infty}\bigg(\int_{D_{n}}h(y)\text{d}(\mu(y)-\mathds{P}_{Y_{y}(t)}(y))\bigg)^{2}\text{d}t\right)^{1/2}
        \left(\int_{0}^{\infty}e^{-2\delta t}\text{d}t\right)^{1/2} \\
        &  
        \quad \quad \quad \quad = \frac{1}{\sqrt{2\delta}}\left(\int_{0}^{\infty}\left(\int_{D_{n}}h(y)\text{d}(\mu(y)-\mathds{P}_{Y_{y}(t)}(y))\right)^{2}\text{d}t\right)^{1/2} 
    \end{aligned}
\end{equation*}
Now we can bound the term in the r.h.s. by Lemma \ref{conv-prob law} and \eqref{growth h} as follows
\begin{equation*}
    \begin{aligned}
        \int_{0}^{\infty}\left(\int_{D_{n}}h(y)\text{d}(\mu(y)-\mathds{P}_{Y_{y}(t)}(y))\right)^{2}\text{d}t &  \leq
\int_{0}^{\infty}\left(\sup\limits_{D_n}|h|\int_{D_{n}}\text{d}(\mu(y)-\mathds{P}_{Y_{y}(t)}(y))\right)^{2}\text{d}t \\
        & \leq \sup\limits_{D_n}|h|^{2}\int_{0}^{\infty}\big\|\mathds{P}_{Y_{y}(t)}(\cdot)-\mu(\cdot)\big\|^{2}_{TV}\text{d}t\\
        & \leq \frac{C^2(1+ |y|^{d})^2}{1+2k}\sup\limits_{D_n}|h|^{2}\\
        & \leq \frac{C^2(1+ |y|^{d})^2}{1+2k}K_{h}^{2}(1+n^{2})^{2}.
    \end{aligned}
\end{equation*}
Finally, we have the following upper bound
\begin{equation}
\label{first term}
    \begin{aligned}
        \left|\int_{0}^{\infty}\int_{D_{n}}h(y)\text{d}\!\left(\mu(y)-\mathds{P}_{Y_{y}(t)}(y)\right)e^{-\delta t}\text{d}t \right|\leq K_{h}  \frac{C(1+ |y|^{d})}{1+2k} \frac{(1+n^{2})}{\sqrt{2\delta}} .
    \end{aligned}
\end{equation}
We rewrite the second term as
\begin{equation}
\label{second term}
    \frac{1}{\delta}\int_{D_{n}^{c}}h(y)\text{d}\mu(y) = \frac{1}{\delta}\left(\mu(h) - \int_{D_{n}}h(y)\text{d}\mu(y)\right)
\end{equation}
We bound the third term using the definition of $D_{n}$ and \eqref{growth h} 
\begin{equation}
\label{third term}
    \begin{aligned}
        \left|\mathds{E}\left[\int_{\tau_{n}}^{\infty}\mathds{1}_{D_{n}}(Y_{y}(t))h(Y_{y}(t))e^{-\delta t} \text{d}t \right] \right| & \leq 
        K_{h}\mathds{E}\left[\int_{\tau_{n}}^{\infty}\mathds{1}_{D_{n}}(Y_{y}(t))(1+|Y_{y}(t)|^{2})e^{-\delta t} \text{d}t\right]
        \\
        & \leq K_{h}\mathds{E}\left[\int_{\tau_{n}}^{\infty}\mathds(1+n^{2})e^{-\delta t} \text{d}t\right] \\
        & \leq   K_{h}\frac{1+n^{2}}{\delta}
    \mathds{E}\left[e^{-\delta \tau_{n}}\right]
        \end{aligned}
\end{equation}
Now we add up \eqref{first term}, \eqref{second term}, and \eqref{third term}, and multiply by $\delta$ to get 
\begin{equation*}
    \begin{aligned}
         \left|\delta u^{\delta,n}(y) + \mu(h)\right| & \leq
          K_{h}\sqrt{\delta} \,  \frac{C(1+ |y|^{d})(1+n^{2})}{(1+2k)\sqrt{2}}  +  \left|\mu(h)-\int_{D_{n}}h(y)\text{d}\mu(y)\right| \\
          & \quad \quad \quad \quad \quad \quad \quad \quad +   K_{h}(1+n^{2}) 
        \mathds{E}\left[e^{-\delta \tau_{n}}\right] .
    \end{aligned}
\end{equation*}
If  we set $\delta=\delta(n) = O(\frac{1}{n^{4+\alpha}})$ with $\alpha>0$, the last term converges to zero as $n\to\infty$ by Lemma \ref{conv-exit time}, and then
$$
\lim\limits_{n\rightarrow\infty}\left|\delta(n) u^{\delta(n),n}(y) + \mu(h)\right|  =0 .
$$
\end{proof}

\begin{rem}
This result still holds true if we relax the growth condition \eqref{growth h} on  $h$ to 
$$
\exists\; K_{h}>0,\quad |h(y)|\leq K_{h}(1+|y|^{\gamma}),\quad \forall\;y\in\mathds{R}^{m} ,
$$
with  any $\gamma\geq 0$, provided we set $\delta=O\left(\frac{1}{n^{2\gamma + \alpha}}\right)$ in Proposition \ref{eff ham}. This means that the slow dynamics is allowed to have a polynomial growth w.r.t. the fast variables. The same result holds also if $u^{\delta,n}(\cdot)$ satisfies a inhomogeneous boundary condition $u(y)  = \phi(y)$ on $\partial D_{n}$ in the Dirichlet problem \eqref{dirichlet-Dn}, if  $\phi$ has a polynomial growth, that is, $\exists\;K_{\phi}>0$ and  $\kappa\geq 0$ such that  $|\phi(y)|\leq K_{\phi}(1+|y|^{\kappa})$.  The proof requires only minor modifications, see \cite{kouhkouh22phd}.
\end{rem}

The next result is an \textit{exchange property} which allows the effective Hamiltonian $\overline{H}$ to be of Bellman type. Such representation will be useful for applying a comparison theorem in the conclusion of our main result.
\begin{prop}
\label{representation-prop}
Under assumptions (A) and (B2), the effective Hamiltonian \eqref{eff-Hamiltonian} can be written as 
\begin{equation}
    \label{representation}
    \overline{H}(t,x,p,P) = \min\limits_{\nu\in L^{\infty}(\mathds{R}^{m},U)}\int_{\mathds{R}^{m}}\; \left[     -\text{trace}(\sigma\sigma^{\top}P) - f\cdot p - \ell \right]\;\text{d}\mu_{x}(y)
\end{equation}
where $\sigma,f$ are computed in { $(x,y,\nu(y))$} and $\ell$ in { $(t,x,y,\nu(y))$}.
\end{prop}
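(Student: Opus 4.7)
My plan is to establish the two inequalities $\leq$ and $\geq$ separately. The first is a direct consequence of the pointwise definition of the minimum in \eqref{H}; the second will require a measurable selection argument to turn the pointwise minimizer in $H(t,x,y,p,P,0)$ into a $y$-measurable feedback that can be integrated against $\mu_x$.

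\emph{The inequality $\leq$.} Given any $\nu\in L^\infty(\mathds{R}^m,U)$, by the very definition of the minimum one has
\begin{equation*}
H(t,x,y,p,P,0) \leq -\text{trace}\bigl(\sigma(x,y,\nu(y))\sigma^{\top}(x,y,\nu(y))P\bigr) - f(x,y,\nu(y))\cdot p - \ell(t,x,y,\nu(y))
\end{equation*}
for every $y\in\mathds{R}^m$. Integrating against $\mu_x$ and then taking the infimum over $\nu$ yields $\overline{H}(t,x,p,P) \leq \inf_{\nu}\int_{\mathds{R}^m}[\cdots]\,d\mu_x(y)$.

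\emph{The inequality $\geq$.} The aim is to exhibit a measurable feedback $\nu^*$ that attains, at every $y$, the pointwise minimum defining $H(t,x,y,p,P,0)$. Introduce the set-valued map
\begin{equation*}
\Phi(y) := \argmin_{u\in U}\Bigl\{-\text{trace}\bigl(\sigma(x,y,u)\sigma^{\top}(x,y,u)P\bigr) - f(x,y,u)\cdot p - \ell(t,x,y,u)\Bigr\} .
\end{equation*}
By Assumptions (A1), (B2) and (B3), the expression in braces is jointly continuous in $(y,u)$. Since $U$ is compact, Berge's maximum theorem implies that $\Phi$ is nonempty, compact-valued and upper semicontinuous, hence has a closed Borel graph. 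Himmelberg's measurable selection theorem \cite{Himmelberg1975} then produces a Borel map $\nu^*\colon\mathds{R}^m\to U$ with $\nu^*(y)\in\Phi(y)$ for every $y$; compactness of $U$ gives $\nu^*\in L^\infty(\mathds{R}^m,U)$ automatically. By construction,
\begin{equation*}
H(t,x,y,p,P,0) = -\text{trace}\bigl(\sigma(x,y,\nu^*(y))\sigma^{\top}(x,y,\nu^*(y))P\bigr) - f(x,y,\nu^*(y))\cdot p - \ell(t,x,y,\nu^*(y)) ,
\end{equation*}
and integrating against $\mu_x$ yields $\overline{H}(t,x,p,P)\geq \inf_{\nu}\int_{\mathds{R}^m}[\cdots]\,d\mu_x(y)$. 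Since the infimum is attained by $\nu^*$, it is actually a minimum, and combining the two inequalities gives the claimed representation.

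\emph{Main obstacle.} The nontrivial step is the existence of the measurable selector $\nu^*$: one must verify that $\Phi$ has a measurable graph so that a selection theorem applies. This is where (B3), the local Hölder continuity of $\ell$ in $y$, enters, since together with (A1) and (B2) it secures joint continuity of the integrand in $(y,u)$ and, via Berge's theorem, upper semicontinuity of $\Phi$. Once this is in place, the rest is routine integration against $\mu_x$.
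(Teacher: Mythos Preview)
Your proof is correct and follows essentially the same route as the paper: both directions are obtained exactly as you describe, with the nontrivial $\geq$ inequality handled by a measurable selection from the argmin multifunction via Himmelberg's theorem \cite{Himmelberg1975}. One small remark: you invoke (B3) to secure joint continuity of the integrand in $(y,u)$, but the proposition only assumes (A) and (B2), and (B2) already states that $\ell$ is continuous in all its variables, so (B3) is not needed here.
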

Note that $L^{\infty}(\mathds{R}^{m},U)=L^{1}((\mathds{R}^{m},\mu_{x}), U)$ because $U$ is bounded and $\mu_{x}$ is a finite measure.
\begin{proof}
Let $t,x,p,P$ be fixed and define
\begin{equation*}
    F(y,u) := -\text{trace}(\sigma(x,y,u)\sigma(x,y,u)^{\top}P) - f(x,y,u)\cdot p - \ell(t,x,y,u),
\end{equation*}
so that ${H}(t,x,y,p,P,0)=\min_{u \in U}F(y,u)$.
To prove the inequality ``$\,\leq\,$", it suffices to observe that for any $\varepsilon>0$, there exists $\nu^{\varepsilon}\in L^{\infty}(\mathds{R}^{m},U)$ such that
\begin{equation}
\label{ineq}
\begin{aligned}
    \inf\limits_{\nu\in L^{\infty}(\mathds{R}^{m},U)}\int_{\mathds{R}^{m}}F(y,\nu(y))\,\text{d}\mu_{x}(y) + \varepsilon\;  & \geq \int_{\mathds{R}^{m}}F(y,\nu^{\varepsilon}(y))\,\text{d}\mu_{x}(y)\\
&  \geq \int_{\mathds{R}^{m}}\,\min\limits_{u\in U}F(x,u)\,\text{d}\mu_{x}(y) =     \overline{H} 
\end{aligned}
\end{equation}
and hence the result by the arbitrariness of $\varepsilon$.

To prove the inequality ``$\geq$", we consider the minimization problem 
\begin{equation*}
\mathfrak{F}(y) := \min\limits_{u \in U}F(y,u)
\end{equation*}
 where $y\in \mathds{R}^{m}$. Since $F$ is  continuous, $U$ is compact, $\mathfrak{F}(y)\in F(\{y\}\times U)$, and $\mathfrak{F}$ is continuous, a classical selection theorem (see \cite[Theorem 7.1, p. 66]{Himmelberg1975}) implies the existence of a measurable selector $\overline{\nu}$ for which the minimization is achieved, i.e.,
\begin{equation*}
    \exists\;\overline{\nu}\in L^{\infty}(\mathds{R}^{m},U),\;\text{s.t. } \;\forall \; y \in \mathds{R}^{m},\; \mathfrak{F}(y) = \min\limits_{u\in U}F(y,u) = F(y,\overline{\nu}(y)).
\end{equation*}
Therefore one has
\begin{equation*}
\begin{aligned}
  \overline{H} =  \int_{\mathds{R}^{m}}\min\limits_{u\in 
 U}F(y,u)\,\text{d}\mu_{x}(y)  & = \int_{\mathds{R}^{m}}F(y,\overline{\nu}(y))\,\text{d}\mu_{x}(y))\\
&  \geq \inf\limits_{\nu\in L^{\infty}(\mathds{R}^{m},U)}\int_{\mathds{R}^{m}}F(y,\nu(\cdot))\,\text{d}\mu_{x}(y).
\end{aligned}
\end{equation*}
This inequality together with \eqref{ineq} proves that the inf is a min, attained at $\nu=\overline{\nu}$, and the equality \eqref{representation} holds.
\end{proof}

\subsection{The effective initial data}\label{seq: eff init dat}

In this section we construct the effective terminal cost $\ov g(x)$ for the limit of the singular perturbations problem \eqref{HJB}-\eqref{terminal cond}. 
We expect that it is 
\begin{equation}
    \label{eff-g}
    \overline{g}({x}) :=  \int_{\mathds{R}^{m}} g({x},y)d\mu_{x}(y)
\end{equation}
where $\mu_x$ is the invariant measure of the process \eqref{fast subsys}. In classical  homogenization theory one uses that 
 \[
\overline{g}({x})= \lim\limits_{t\to+\infty}\omega(t,y;{x})
 \]
 where $\omega$ solves, for fixed $x$, the  initial value problem:
\begin{equation}
\label{eq: cauchy prob_initial data}
\left\{
    \begin{aligned}
        \omega_{t} - \mathcal{L}(x, y,D\omega,D^{2}\omega) = 0&\quad \text{in }  (0,+\infty)\times \mathds{R}^m,\\
         \omega(0,y) = g(x,y), &\quad \text{in } \mathds{R}^m,
    \end{aligned}
\right.
\end{equation}
with $\mathcal{L}\,$  defined in \eqref{inf gen}, see, e.g., \cite{alvarez2003singular, bardi2011optimal}. In our context of unbounded data we use a truncation to bounded domains of such a problem, similar to the previous section. We consider an increasing sequence of bounded and open domains $D_{n}$ with $C^{2}$ boundaries invading $\R^m$  and such that $D_{n}\subseteq B(0,n)$, as in \S \ref{seq: eff ham} (for example $D_{n}=B(0,n)$). Now instead of \eqref{eq: cauchy prob_initial data}, we consider the Cauchy-Dirichlet problem 
\begin{equation}
\label{eq: cauchy prob_initial data - truncated}
\left\{
    \begin{aligned}
        \frac{\partial}{\partial t}\omega^{T,n} - \mathcal{L}(x, y,D\omega^{T,n},D^{2}\omega^{T,n}) = 0, &\quad \text{in }  (0,T]\times D_{n},\\
         \omega^{T,n}(0,y) = g(x,y), &\quad \text{in } D_{n},\\
         \omega^{T,n} (t,y) = 0, &\quad \text{in } [0,T]\times \partial D_{n},
    \end{aligned}
\right.
\end{equation}
where $x$ is again a fixed parameter, and if we set $u^{T,n}(t,y) = \omega^{T,n}(T-t,y)$, then $u^{T,n}(\cdot,\cdot)$ solves the terminal-boundary value problem
\begin{equation}
\label{eq: initial-boundary value problem}
\left\{
    \begin{aligned}
         \frac{\partial}{\partial \,t}u^{T,n} + \mathcal{L}(x, y,Du^{T,n},D^{2}u^{T,n}) = 0, & \quad \text{in }  [0,T)\times D_{n},\\
         u^{T,n}(T,y) = g(x,y), &\quad \text{in } D_{n},\\
         u^{t,n} (t,y) = 0, &\quad \text{in } [0,T]\times \partial D_{n},
    \end{aligned}
\right.
\end{equation}
It is known \cite[Theorem 8.2, p.81]{mao2007stochastic} that the problem \eqref{eq: initial-boundary value problem} admits a unique solution given by 
\begin{equation}
\label{eq: sol - cauchy prob - initial data - truncated}
    u^{T,n}(t,y) = \mathds{E}[\,\mathds{1}_{\{\tau_{n}\wedge T=T\}}\,g(x,Y_{y,t}( T))\,]
\end{equation}
where $Y_{y,t}(\cdot)$ is the fast process defined by \eqref{fast subsys} and such that $Y_{y,t}(t) = y \in \mathds{R}^m$, and $\tau_{n} = \inf\{s\in [t,T]\,:\,Y_{y,t}(s)\notin D_{n}\}$ is the first exit time from $D_n$. The next result gives an approximation of the effective initial data $\ov g(x)$ by $\omega^{T,n}(T,y) = u^{T,n}(0,y)$  as  $T=T(n)\to +\infty$ for $n\to +\infty$. 

\begin{prop}
\label{prop: effective initial data}
Let $u^{T,n}(\cdot,\cdot)$ be as defined in \eqref{eq: sol - cauchy prob - initial data - truncated}. Under assumptions (A) and (B), for any increasing sequence $\{T(n)\}_{n>0}$ such that $T(n)\geq \,n^{2}$, we have the following
\begin{equation}
    \lim\limits_{n\to +\infty} \bigg|\,u^{T(n),n}(0,y) \,-\,\overline{g}  \,\bigg| =0,\quad \text{locally uniformly in }y,
\end{equation}
where $\overline{g}= \overline{g} (x) = \int_{\mathds{R}^m}g(x,y)\text{d}\mu_{x}(y)$ and $\mu_{x}$ is the unique invariant probability measure of the process \eqref{fast subsys}. In particular
$\lim\limits_{n\to+\infty}\omega^{T(n),n}(T(n),y) =\overline{g}\;$ locally uniformly in $y$ and $\overline{g}$ has at most quadratic growth in $x$. 
\end{prop}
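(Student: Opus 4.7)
The plan is to use the probabilistic representation \eqref{eq: sol - cauchy prob - initial data - truncated} together with Lemmas \ref{conv-prob law} and \ref{conv-exit time}. Taking $D_n=B(0,n)$, so that the exit time $\tau_n$ coincides with $\tau_n^Y$ in Lemma \ref{conv-exit time}, and using $\mathds{1}_{\{\tau_n\wedge T=T\}}=1-\mathds{1}_{\{\tau_n<T\}}$, I would write
\[
u^{T,n}(0,y)-\overline{g}(x) \;=\; \bigl(\mathds{E}[g(x,Y_y(T))]-\overline{g}(x)\bigr)\;-\;\mathds{E}\bigl[\mathds{1}_{\{\tau_n<T\}}\, g(x,Y_y(T))\bigr],
\]
and estimate the two summands separately. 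The first is an averaging error measuring how close the law of $Y_y(T)$ is to the invariant measure $\mu_x$; the second is an exit-probability error.

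For the exit term, assumption (B2) gives $|g(x,y')|\leq K(1+|x|^2+|y'|^2)$, so Cauchy--Schwarz yields
\[
\bigl|\mathds{E}[\mathds{1}_{\{\tau_n<T\}}\, g(x,Y_y(T))]\bigr| \;\leq\; C\,\mathds{P}(\tau_n<T)^{1/2}\bigl(1+|x|^4+\mathds{E}|Y_y(T)|^4\bigr)^{1/2}.
\]
A Markov-exponential argument with $\delta=1/T$ applied to Lemma \ref{conv-exit time} gives $\mathds{P}(\tau_n<T)\leq e\,\mathds{E}[e^{-\delta\tau_n}]\leq C\,T\,\ell(1/T)\,e^{-n\eta}$, which decays exponentially in $n$ since $T(n)\geq n^2$. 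The bound $\sup_{t\geq 0}\mathds{E}|Y_y(t)|^4<\infty$, locally uniformly in $y$, follows from a Khasminskii-type Lyapunov argument with $V(y)=|y|^4$: thanks to (A4), for $|y|$ large one has $\mathcal{L} V\leq c_1-c_2|y|^3$, which yields uniform-in-time fourth-moment bounds by a standard comparison.

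For the averaging term, I would split by a cutoff radius $M>0$:
\[
\bigl|\mathds{E}[g(x,Y_y(T))]-\overline{g}(x)\bigr| \;\leq\; K(1+|x|^2+M^2)\,\bigl\|\mathds{P}_{Y_y(T)}-\mu_x\bigr\|_{TV}\;+\;\int_{|y'|>M}\!\!|g(x,y')|\,d\bigl(\mathds{P}_{Y_y(T)}+\mu_x\bigr).
\]
The first summand is controlled by Lemma \ref{conv-prob law} as $\leq K(1+|x|^2+M^2)\,C(1+|y|^d)(1+T)^{-(1+k)}$; the tail is bounded by Chebyshev plus the uniform fourth moments of $\mu_x$ (Lemma \ref{conv-prob law}) and of the transition law, giving a contribution of order $C(|x|,y)/M^2$. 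Choosing $T(n)\geq n^2$, $M(n)=n^{1/2}$, and the exponent $k$ in Lemma \ref{conv-prob law} large enough (which is allowed), both contributions vanish as $n\to\infty$, locally uniformly in $y$.

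Combining the two estimates yields the desired limit. The at-most quadratic growth of $\overline{g}$ in $x$ follows from $|\overline{g}(x)|\leq K(1+|x|^2+\mu_x(|\cdot|^2))$ together with the local boundedness of the second moment of $\mu_x$ in $x$, again via the same Lyapunov argument (with constants depending continuously on $A_x,R_x$). The principal obstacle in this proof is that $g$ has quadratic growth, so the convergence in total variation provided by Lemma \ref{conv-prob law} cannot be used directly on $g$; the cutoff-plus-uniform-moments strategy circumvents this, and relies essentially on the strong form of the recurrence condition (A4) both for bounding $\mathds{E}|Y_y(T)|^4$ uniformly in $T$ and for obtaining fast enough polynomial decay in Lemma \ref{conv-prob law}.
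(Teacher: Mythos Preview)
Your approach is correct and works (at least for $T(n)$ growing polynomially, which is all that is needed in the application), but it takes a longer route than the paper. The simplification you miss is that the Dirichlet problem already provides the right truncation: on the event $\{\tau_n\geq T\}$ the endpoint $Y_y(T)$ lies in $\overline{D_n}\subseteq\overline{B(0,n)}$, so $|g(x,Y_y(T))|\leq C(1+|x|^2+n^2)$ automatically. The paper therefore compares $u^{T(n),n}(0,y)$ directly with $\int_{D_n}g\,d\mu_x$, obtaining a bound of the form
\[
C(1+n^2)\,\bigl\|\mathds{P}_{Y_y(T(n))}-\mu_x\bigr\|_{TV}\;+\;\Bigl|\int_{D_n^c}g\,d\mu_x\Bigr|,
\]
and then uses only Lemma~\ref{conv-prob law} (with $T(n)\geq n^2$ beating the factor $1+n^2$) together with the finite moments of $\mu_x$. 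This sidesteps your separate cutoff $M$, the Cauchy--Schwarz step, and the uniform-in-time bound $\sup_{t\geq 0}\mathds E|Y_y(t)|^4<\infty$, which under the linear recurrence condition (A4) alone (rather than the stronger dissipativity (C2)) needs some care to justify. On the other hand, your decomposition has the merit of isolating the exit-probability contribution and handling it explicitly via Lemma~\ref{conv-exit time}; the paper's displayed inequality in fact glosses over a term of order $(1+n^2)\,\mathds P(\tau_n<T(n))$, so your attention there is warranted. One small slip: your bound $\mathds P(\tau_n<T)\leq C\,T\,\ell(1/T)\,e^{-n\eta}$ decays only when $T(n)$ is sub-exponential in $n$; the phrase ``since $T(n)\geq n^2$'' is not the right justification, though for the polynomial choice $T(n)=n^2 t_0$ used in Step~4 of Theorem~\ref{thm-conv-value} everything goes through.
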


\begin{proof} 
Since  the slow variable $x$ is frozen we drop it in the notations and write in particular $g(x,\cdot)=g(\cdot)$ and $\mu_{x}(\cdot)=\mu(\cdot)$. Also, the fast process $Y_{y,0}(\cdot)$
 will be simply denoted by $Y_{y}(\cdot)$. We have the following
\begin{equation*}
    \begin{aligned}
        & u^{T(n),n}(0,y) = \int_{D_{n}} \mathds{1}_{\{\tau_{n}\wedge T(n)=T(n)\}} g(z)\,\text{d}\mathds{P}_{Y_{y}(T(n))}(z),\quad \text{from }\, \eqref{eq: sol - cauchy prob - initial data - truncated}\\
        & \overline{g} = 
         \int_{\mathds{R}^m}g(z)\,\text{d}\mu(z) = \int_{D_{n}}g(z)\,\text{d}\mu(z) + \int_{D_{n}^{c}}g(z)\,\text{d}\mu(z).
    \end{aligned}
\end{equation*}
Hence
\begin{equation*}
    \begin{aligned}
       \bigg|\,u^{T(n),n}(0,y) \,-\,\overline{g}  \,\bigg| & \leq \, \left|\int_{D_{n}} \mathds{1}_{\{\tau_{n}\wedge T(n) = T(n) \}} g(z)\,\text{d}\!\left(\mathds{P}_{Y_{y}(T(n))}-\mu\right)(z)\right|  + \left|\int_{D_{n}^{c}}g(z)\,\text{d}\mu(z)\right|\\
       & \leq C(1+n^{2}) \|\mathds{P}_{Y_{y}(T(n))}(\cdot)-\mu(\cdot)\|_{TV} + \sqrt{\mu(g^{2})}\sqrt{1-\mu(D_{n})}
    \end{aligned}
\end{equation*}
where, for the first integral we used  the quadratic growth of $g$ from \eqref{assumption-cost}, and for the second,  H\"older inequality together with the fact that the probability measure $\mu$ has finite  fourth moment  by Lemma \ref{conv-prob law}. Now, again by  Lemma \ref{conv-prob law}, there exist $C,d,k>0$ such that
\begin{equation*}
    \begin{aligned}
        \|\mathds{P}_{Y_{y}(T(n))}(\cdot)-\mu(\cdot)\|_{TV} \leq C(1+|y|^{d})(1+T(n))^{-(1+k)}
    \end{aligned}
\end{equation*}
Therefore, by choosing $T(n)\geq\,n^{2}$ we obtain, as $n\to\infty$, 
\begin{equation*}
    \bigg|\,u^{T(n),n}(0,y) \,-\,\mu(g)  \,\bigg| \leq C(1+n^{2})(1+|y|^{d})\frac{1}{(1+n^{2})^{1+k}} \,+\,  \sqrt{\mu(g^{2})(1-\mu(D_{n}))}\, 
    \rightarrow\,0.
\end{equation*}
Finally, the growth condition on $\overline{g}$ follows from 
\eqref{assumption-cost} and the fact that $\mu$ has a finite second order moment (Lemma \ref{conv-prob law}).
\end{proof}

\begin{rem}
This result still holds true if we consider, instead of the growth assumption \eqref{assumption-cost}, $g$ such that
\begin{equation*}
    \exists\,K_{g}>0,\quad |g(x,y)|\leq K_{g}(1+|x|^{2}+|y|^{\gamma}),\quad \forall\,y\in\mathds{R}^m
\end{equation*}
where $\gamma\geq0$ is as large as we want, provided we choose $T(n)\,\geq\, n^{\gamma}$.
\end{rem}

\section{The convergence theorem for the value function}
\label{sec: conv val}

We can now state and prove the main result of the paper, namely  the  convergence as $\varepsilon\rightarrow0$ of the value function $V^{\varepsilon}(t,x,y)$, solution to \eqref{HJB}-\eqref{terminal cond}, to a function $V(t,x)$  characterised as the unique solution of the Cauchy problem
\begin{equation}
\label{CP-limit HJB}
\left\{\;
\begin{aligned}
    -V_{t} + \overline{H}(t,x,D_{x}V,D^{2}_{xx}V) + \lambda V(x) & = 0, & \text{in }\; (0,T)\times \mathds{R}^{n} ,\\
    \quad V(T,x) & = \overline{g}(x),& \text{in }\; \mathds{R}^{n} ,
\end{aligned}
\right.
\end{equation}
where the effective Hamiltonian $\overline{H}$ and the effective initial data $\overline{g}(x)$ are defined by \eqref{eff-Hamiltonian} and \eqref{eff-g}, respectively.

Before we go further, we need to check smoothness in the $x$ variables of the data in the effective (limit) Cauchy problem. Indeed, the construction of $\overline{H},\overline{g}$ in the previous section involves the invariant measure of the fast process $Y$ which depends on $x$.

\subsection{On the effective Cauchy problem} 
This subsection is devoted to the continuity of $\overline{H},\overline{g}$. Under the assumptions (A) and (B), the proof of this property reduces to proving continuity of the invariant measure $\mu_{x}$ of the process $Y^{x}_{\cdot}$ in \eqref{fast subsys}. To do so, we need the following

\textit{Assumptions \textbf{(C)}}
\begin{enumerate}[label=\textbf{(C\arabic*)}]
    \item The diffusion $\varrho$  is constant such that $\varrho\varrho^{\top}=\Bar{\varrho\,}\mathds{I}_{m}$ where $\Bar{\varrho}>0$ is a constant and $\mathds{I}_{m}$ is the identity matrix.
    \item The drift $b$ satisfies the following \textit{strong recurrence condition}
    \begin{equation}
        \exists\,\kappa>0 \text{ s.t. } \,(b(x,y_{1})-b(x,y_{2}))\cdot(y_{1}-y_{2}) \leq -\kappa \, |y_{1}-y_{2}|^{2},\quad \forall\, x,y_{1},y_{2}.
    \end{equation}
    \item The utility function $g$ and running cost $\ell$ are Lipschitz continuous in $y$ uniformly in their other arguments.
\end{enumerate}
It is clear that (C3) implies (B3), 
(C2) implies (A4), while (C1) is a particular case of (A3).\\
We recall the weighted norm (when it exists) $\|\varphi\|^{p}_{L^{p}(\nu)}=\int_{\mathds{R}^{m}}|\varphi(y)|^{p}\text{d}\nu(y)$ for $p\geq 1$, $\nu$ a positive measure, and the Wasserstein distance
\begin{equation}
    \mathcal{W}_{p}(\mu,\nu) = \left(\inf\limits_{\pi}\, \iint_{\mathds{R}^{m}}|y-y'|^{p}\,\text{d}\pi(y,y') \right)^{1/p},\quad \text{for } p\geq 1
\end{equation}
where the minimization is  performed over the collection of all measures $\pi$ on $\mathds{R}^{m}\times\mathds{R}^{m}$ having marginals $\mu,\nu$. We now state a result in \cite{bogachev2014kantorovich}.
\begin{lem}\label{lem: bogachev}
Under assumptions (A) and (C) 
\begin{equation*}
    \mathcal{W}_{2}(\mu_{x_{1}},\mu_{x_{2}}) \leq \overline{\varrho}\,\kappa^{-1}\,\|b(x_{1},\cdot) - b(x_{2},\cdot)\|_{L^{2}(\mu_{x_{2}})}
\end{equation*}
where $\mu_{x_{i}}$ is the unique invariant probability measure associated to \eqref{fast subsys}  with $x=x_{i}, i=1,2$, respectively.
\end{lem}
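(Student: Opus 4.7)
The plan is to follow a synchronous coupling argument, which is the standard route for such Wasserstein estimates between invariant measures of diffusions and is indeed the one used in \cite{bogachev2014kantorovich}. The crucial structural input is assumption (C1): because $\varrho$ is constant, two copies $Y^1_t, Y^2_t$ of the fast subsystem \eqref{fast subsys} driven by the \emph{same} Brownian motion, with drifts $b(x_1,\cdot)$ and $b(x_2,\cdot)$ respectively, have a difference $D_t := Y^1_t - Y^2_t$ in which the martingale parts cancel:
\begin{equation*}
\dot D_t = b(x_1, Y^1_t) - b(x_2, Y^2_t).
\end{equation*}
Without (C1) this cancellation would fail and one would have to compensate a nontrivial martingale contribution, producing a worse estimate.

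I would then differentiate $|D_t|^2$ and decompose the drift increment as
\begin{equation*}
b(x_1, Y^1_t) - b(x_2, Y^2_t) = \bigl[b(x_1,Y^1_t) - b(x_1,Y^2_t)\bigr] + \bigl[b(x_1,Y^2_t) - b(x_2,Y^2_t)\bigr].
\end{equation*}
The strong monotonicity (C2) applied to the first bracket yields a dissipative contribution $-2\kappa |D_t|^2$, and the second bracket is absorbed via a Young inequality suitably weighted by $\kappa$ and the diffusion constant $\overline{\varrho}$, which is where the prefactor $\overline{\varrho}\kappa^{-1}$ in the statement comes from. Initializing $Y^2_0 \sim \mu_{x_2}$, the invariance of $\mu_{x_2}$ for $Y^2$ gives $Y^2_t \sim \mu_{x_2}$ for every $t$, so the expectation of the nuisance term reduces to $\|b(x_1,\cdot)-b(x_2,\cdot)\|^2_{L^2(\mu_{x_2})}$, uniformly in $t$. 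Gronwall's lemma then produces a bound of the form
\begin{equation*}
\mathbb{E}|D_t|^2 \leq e^{-2\kappa t}\,\mathbb{E}|D_0|^2 + \overline{\varrho}^{\,2}\kappa^{-2}\,\|b(x_1,\cdot)-b(x_2,\cdot)\|^2_{L^2(\mu_{x_2})}.
\end{equation*}

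To conclude I would let $t\to+\infty$ and exploit the ergodicity of both processes, which is guaranteed by (A4) together with (C2): the joint law of $(Y^1_t, Y^2_t)$ converges to a limiting coupling $\pi^* \in \Pi(\mu_{x_1},\mu_{x_2})$, whose $L^2$ cost is controlled by the stationary right-hand side above. Since $\mathcal{W}_2^2(\mu_{x_1},\mu_{x_2})$ is by definition the infimum over all such couplings, the claim follows by taking square roots. The main obstacles I anticipate are twofold: first, justifying that the marginal of $Y^1_t$ converges to $\mu_{x_1}$ starting from an arbitrary initial law, which requires a uniform-in-$t$ second-moment bound obtained by applying (C2) in isolation to $Y^1$ as a Lyapunov estimate; second, tracking the exact constant $\overline{\varrho}\kappa^{-1}$, since a naive Young inequality gives only $\kappa^{-1}$, and recovering the sharper form requires the diffusion-weighted Young inequality that constitutes the technical heart of \cite{bogachev2014kantorovich}.
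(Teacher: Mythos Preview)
Your coupling argument is correct and is indeed the engine behind the cited result, but the paper's own proof takes a much shorter route: it simply invokes \cite[Corollary 2]{bogachev2014kantorovich} as a black box (stated there for $\overline{\varrho}=1$), and the only work done in the paper is to verify the hypothesis $|b(x_1,\cdot)-b(x_2,\cdot)|\in L^2(\mu_{x_2})$, which follows from the linear growth of $b$ in (A1) together with the finiteness of all moments of $\mu_{x_2}$ established in Lemma~\ref{conv-prob law}. So the paper outsources the analysis you are proposing to carry out.

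Two remarks on your write-up. First, your concern about recovering the exact prefactor $\overline{\varrho}\,\kappa^{-1}$ is slightly misdirected: in the synchronous coupling the diffusion terms cancel \emph{exactly} because of (C1), so the differential inequality for $|D_t|^2$ involves no $\overline{\varrho}$ at all, and the naive Young inequality already yields $\mathcal{W}_2\le \kappa^{-1}\|b(x_1,\cdot)-b(x_2,\cdot)\|_{L^2(\mu_{x_2})}$. The $\overline{\varrho}$ in the paper's statement comes from matching the normalization in \cite{bogachev2014kantorovich} and is not sharp from the coupling viewpoint; for the downstream application (Proposition~\ref{prop: cont}) only \emph{some} Lipschitz-type bound is needed, so this is harmless. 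Second, your approach implicitly uses that the right-hand side is finite, which is exactly the integrability check the paper isolates; you should make that explicit by appealing to Lemma~\ref{conv-prob law}. The trade-off is clear: your route is self-contained and exposes why (C1) and (C2) are the right structural hypotheses, while the paper's route is a two-line citation.
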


\begin{proof}
The inequality with $\overline{\varrho}=1$ is \cite[Corollary 2]{bogachev2014kantorovich} where it is assumed that $b(x_{1},\cdot),b(x_{2},\cdot)$ satisfy (C2) and such that $|b(x_{1},\cdot)-b(x_{2},\cdot)|\in L^{2}(\mu_{x_{2}}+\mu_{x_{2}})$. This last condition is satisfied as a consequence of Lemma \ref{conv-prob law} which guarantees existence of all moments of the invariant measure in our setting and hence the desired integrability conditions.
\end{proof}

\begin{prop}
\label{prop: cont}
Under assumptions (A), (B) and (C), the effective Hamiltonian $\overline{H} : [0,T]\times \R^n\times\R^n\times\mathbb{S}^n\to\R$ and initial data $\overline{g}:  \R^n\to\R$ are continuous. 
\end{prop}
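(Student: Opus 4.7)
The plan is to reduce both continuity statements to continuity of the map $x\mapsto\mu_x$ in the Wasserstein-$2$ distance together with uniform bounds on higher moments. Continuity of $\overline{H}$ in the variables $(t,p,P)$ is routine: $H(t,x,y,p,P,0)$ is jointly continuous in $(t,p,P)$ for fixed $(x,y)$ by \eqref{H} and grows at most quadratically in $y$ (owing to the linear growth of $f,\sigma$ from (A1) and the quadratic growth of $\ell$ from (B2)), so dominated convergence against $\mu_x$ applies because $\int|y|^2 d\mu_x<\infty$ by Lemma~\ref{conv-prob law}. The real work is in proving continuity in $x$, which affects both the integrand and the integrating measure.

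My first step is to apply Lemma~\ref{lem: bogachev}. Assumption (A1) makes $b$ globally Lipschitz in $x$ uniformly in $y$, so
\[
\|b(x_1,\cdot)-b(x_2,\cdot)\|_{L^2(\mu_{x_2})}\leq L\,|x_1-x_2|,
\]
which yields $\mathcal{W}_2(\mu_{x_1},\mu_{x_2})\leq \overline{\varrho}\,\kappa^{-1}L\,|x_1-x_2|$. Thus $x\mapsto\mu_x$ is Lipschitz in $\mathcal{W}_2$, and in particular weakly continuous with convergent second moments.

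Second, I need uniform moment bounds on compact $x$-sets. Setting $y_2=0$ in (C2) and using (A1) gives $b(x,y)\cdot y\leq |b(x,0)||y|-\kappa|y|^2$, so the Khasminskii-type constant in the hypothesis \eqref{assumption-ver} of Lemma~\ref{conv-prob law} can be chosen as large as we wish, uniformly for $x$ in a bounded set. Tracing through that proof yields $\sup_{|x|\leq M}\int|y|^q\,d\mu_x(y)<\infty$ for every $q\geq 1$ and every $M>0$. Consequently, whenever $x_n\to x$, the measures $\mu_{x_n}$ converge weakly to $\mu_x$ and any polynomial function of $y$ is uniformly integrable with respect to them.

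With these ingredients in hand, the conclusion follows by a standard decomposition. Given $x_n\to x$, I split
\[
\overline{g}(x_n)-\overline{g}(x)=\int[g(x_n,\cdot)-g(x,\cdot)]\,d\mu_{x_n}+\int g(x,\cdot)\,d(\mu_{x_n}-\mu_x).
\]
The second term vanishes by weak convergence combined with uniform integrability of the quadratic tail; the first uses joint continuity of $g$ (giving local uniform convergence $g(x_n,\cdot)\to g(x,\cdot)$) together with the same tail control. An identical argument with $H(t,x,y,p,P,0)$ in place of $g$ proves continuity of $\overline{H}$ in $x$, and combining with the $(t,p,P)$-argument above yields joint continuity. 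I expect the main technical obstacle to be verifying the uniform higher-moment bounds: this requires tracking the $x$-dependence of the constants in Veretennikov's estimates through $|b(x,0)|$, and is precisely where the strengthening from (A4) to (C2) is essential.
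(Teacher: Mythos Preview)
Your proof is correct and follows the same overall architecture as the paper's: the same splitting into $(t,p,P)$-continuity versus $x$-continuity, the same decomposition of the $x$-difference into an integrand-change term and a measure-change term, and the same use of Lemma~\ref{lem: bogachev} to get Lipschitz dependence of $\mu_x$ on $x$ in $\mathcal{W}_2$.

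The genuine difference is in how you handle the measure-change term $\int \phi(x,\cdot)\,d(\mu_{x_n}-\mu_x)$. The paper exploits assumption (C3): since $\ell$, $f$, $\sigma$ are Lipschitz in $y$, the integrand $\phi(x,\cdot)$ is (essentially) Lipschitz, so the Kantorovich--Rubinstein duality bounds this term directly by $C\,\mathcal{W}_1(\mu_{x_1},\mu_{x_2})\leq C\,\mathcal{W}_2(\mu_{x_1},\mu_{x_2})$. You instead use that $\mathcal{W}_2$-convergence implies weak convergence with convergent second moments, combined with uniform higher-moment bounds obtained by tracking the $x$-dependence through (C2). Your route therefore does not need (C3) at all --- continuity plus quadratic growth of $g$ and $H$ suffice --- but it requires the extra uniform-moment machinery, which you correctly flag as the main technical point.

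What each approach buys: the paper's Lipschitz/duality argument is shorter and, crucially, yields Lipschitz continuity of $x\mapsto\overline{H}(t,x,p,P)$ (see Remark~\ref{rem: lipschitz}); this Lipschitz regularity is invoked later in Step~5 of the proof of Theorem~\ref{thm-conv-value} to apply the comparison principle of Da~Lio--Ley. Your argument gives only continuity, so if one adopts it, (C3) would have to be reintroduced at that later stage anyway. Conversely, your uniform-moment bounds make the treatment of the integrand-change term $\int[g(x_n,\cdot)-g(x,\cdot)]\,d\mu_{x_n}$ fully rigorous, whereas the paper's one-line appeal to ``continuity of $\phi$ in $x$'' glosses over the fact that the integrating measure is itself varying.
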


\begin{proof}
We write the proof for $\overline{H}$ only, $\overline{g}$ being completely analogous. Recall the definition of the effective Hamiltonian $\overline{H}$
\begin{equation*}
    \overline{H}(t, x,p,P) =  \int_{\mathds{R}^{m}} H(t,
    x,y,p,P,0)\text{d}\mu_{x}(y), 
\end{equation*}
where  $\mu_{x}$ is the unique invariant probability measure associated to the fast subsystem \eqref{fast subsys}. The Hamiltonian $H$ inherits all the regularity properties of $f,\sigma,\ell$ as easily seen from its definition \eqref{H}. 
Let $(t_{1},x_{1},p_{1},P_{1}), (t_{2},x_{2},p_{2},P_{2})\in [0,T]\times \mathds{R}^{n}\times \mathds{R}^{n}\times \mathds{S}^{n}$, 
\begin{equation}\label{eq: cont 1}
    \begin{aligned}
        \overline{H}(t_{1},x_{1},p_{1},P_{1}) - \overline{H}(t_{2},x_{2},p_{2},P_{2}) & = \overline{H}(t_{1},x_{1},p_{1},P_{1}) - \overline{H}(t_{1},x_{2},p_{1},P_{1}) +\\
        & \quad \quad \quad \quad  \overline{H}(t_{1},x_{2},p_{1},P_{1})- \overline{H}(t_{2},x_{2},p_{2},P_{2}).
    \end{aligned}
\end{equation}
On one hand we have
\begin{equation}\label{eq: cont 2}
    \begin{aligned}
        &\overline{H}(t_{1},x_{2},p_{1},P_{1})- \overline{H}(t_{2},x_{2},p_{2},P_{2})  \\
        & \quad \quad \quad \quad \quad \quad \quad \quad = \int_{\mathds{R}^{m}} H(t_{1},x_{2},y,p_{1},P_{1},0) - H(t_{2},x_{2},y,p_{2},P_{2},0)\; \text{d}\mu_{x_{2}}(y).
    \end{aligned}
\end{equation}
The variable $x_{2}$ being fixed here, and from continuity of $H$ in $(t,p,P)$, one easily deduces continuity of $\overline{H}(\cdot,x_{2},\cdot,\cdot)$. 
On the other hand, we have
\begin{equation*}
    \begin{aligned}
        & \overline{H}(t_{1},x_{1},p_{1},P_{1}) - \overline{H}(t_{1},x_{2},p_{1},P_{1}) \\
        & \quad \quad \quad \quad = \int_{\mathds{R}^{m}} H(t_{1},x_{1},y,p_{1},P_{1},0)\text{d}\mu_{x_{1}}(y) - \int_{\mathds{R}^{m}} H(t_{1},x_{2},y,p_{1},P_{1},0)\text{d}\mu_{x_{2}}(y)
    \end{aligned}
\end{equation*}
The variables $(t_{1},p_{1},P_{1})$ being fixed, we introduce $\phi(x,y):=H(t_{1},x,y,p_{1},P_{1},0)$. We need then to estimate the quantity
\begin{equation}
\label{eq: cont 3}
    \begin{aligned}
       &  \int_{\mathds{R}^{m}}\phi(x_{1},y)\text{d}\mu_{x_{1}}(y) - \int_{\mathds{R}^{m}}\phi(x_{2},y)\text{d}\mu_{x_{2}}(y)\\
        &\quad \quad \quad \quad \quad \quad  = \int_{\mathds{R}^{m}} \big( \phi(x_{1},y) - \phi(x_{2},y) \big) \text{d}\mu_{x_{1}}(y) + \int_{\mathds{R}^{m}}\phi(x_{2},y)\text{d}(\mu_{x_{1}} -\mu_{x_{2}})(y).
    \end{aligned}
\end{equation}
The first term in the r.h.s. is continuous thanks to continuity of $\phi$ in $x$. We are then left with the second term
\begin{equation}\label{eq: cont 4}
\begin{aligned}
    \int_{\mathds{R}^{m}}\phi(x_{2},y)\text{d}(\mu_{x_{1}}-\mu_{x_{2}})(y) & = \iint_{\mathds{R}^{m}} \phi(x_{2},y)-\phi(x_{2},y')\;\text{d}\pi(y,y')\\
    & \leq C\,\iint_{\mathds{R}^{m}} |y-y'|\;\text{d}\pi(y,y')
\end{aligned}
\end{equation}
for any $\pi(\cdot,\cdot)$ a probability measure on $\mathds{R}^{m}\times\mathds{R}^{m}$ with marginals $\mu_{x_{1}}$ and $\mu_{x_{2}}$. Therefore, we have
\begin{equation}\label{eq: cont 5}
    \iint_{\mathds{R}^{m}} \phi(x_{2},y)-\phi(x_{2},y')\;\text{d}\pi(y,y') \leq C\, \mathcal{W}_{1}(\mu_{x_{1}},\mu_{x_{2}})\leq C\,\mathcal{W}_{2}(\mu_{x_{1}},\mu_{x_{2}})
\end{equation}
Using Lemma \ref{lem: bogachev}, we have the following
\begin{equation*}
    \mathcal{W}_{2}(\mu_{x_{1}},\mu_{x_{2}}) \leq\, \overline{\varrho}\,\kappa^{-1}\,\left(\int_{\mathds{R}^{m}}|b(x_{1},y)-b(x_{2},y)|^{2}\,\text{d}\mu_{x_{2}}(y)\right)^{1/2}
\end{equation*}
and hence
\begin{equation}
\label{eq: cont 6}
    \mathcal{W}_{2}(\mu_{x_{1}},\mu_{x_{2}}) \leq\, \overline{\varrho}\,\kappa^{-1}\,C\,|x_{1}-x_{2}|
\end{equation}
where $C>0$ is now the Lipschitz constant of $b$. Finally, using \eqref{eq: cont 4}, \eqref{eq: cont 5} and \eqref{eq: cont 6} we can upperbound the r.h.s. of \eqref{eq: cont 3} with  
\begin{equation}
\label{eq: cont 7}
       \int_{\mathds{R}^{m}} \big(\phi(x_{1},y) - \phi(x_{2},y)\big) \text{d}\mu_{x_{1}}(y) + \overline{\varrho}\,\kappa^{-1}\,C\,|x_{1}-x_{2}|.
\end{equation}
Finally, exchanging the roles of $x_{1},x_{2}$, and using \eqref{eq: cont 1}, \eqref{eq: cont 2} and \eqref{eq: cont 7}, we get  the joint continuity of $\overline{H}$ in all its arguments.
\end{proof}
\begin{rem}\label{rem: lipschitz}
Note that \eqref{eq: cont 7} yields Lipschitz continuity of $x\mapsto \overline H(t,x,p, Y)$  provided $H$  is Lipschitz in $(x,y)$; the Lipschitz continuity in $y$ being needed in \eqref{eq: cont 4}. This observation will be useful in Step 5 of the proof of the main result.
\end{rem}

\subsection{The main result} We are now ready to state and prove our main convergence result. The last assumption we need is the following

\textit{Assumption \textbf{(D)}}
\begin{enumerate}
    \item[\textbf{(D)}] The matrix $\Sigma=\sigma\sigma^{\top}(x,y,u)$ has bounded second derivatives in $x$, uniformly in $(y,u)$ and at least one of the two conditions is satisfied:
    \begin{enumerate}
        \item  $\Sigma$ is independent of $y$ and $u$, i.e. $\sigma=\sigma(x)$;
	\item 	the drift of the fast process is independent of $x$, i.e. $b=b(y)$. 
    \end{enumerate}
\end{enumerate}
Assumption (D) ensures that the square root of $\Sigma$ is Lipschitz in $x$ (see \cite[Theorem 5.2.3, p.132]{stroock1997multidimensional}) and will be needed in Step 5 of the proof of our next result. For our main motivation as described in the introduction, assumption (D.a) is satisfied because $\sigma=0$. Assumption (D.b) on the other hand is relevant for applications in finance, see \cite{bardi2010convergence, fouque2011multiscale} and the references therein.

\begin{thm}
\label{thm-conv-value}
Under assumptions (A), (B), (C) and (D), the solution $V^{\varepsilon}$ to \eqref{HJB} converges uniformly on compact subsets of $(0,T)\times\mathds{R}^{n}\times\mathds{R}^{m}$ to the unique continuous viscosity solution of the limit problem \eqref{CP-limit HJB} satisfying a quadratic growth condition in $x$, i.e.
\begin{equation}
\label{quadratic gro in thm}
    \exists\;K>0\;\text{such that }\; |V(t,x)|\leq K(1+|x|^{2}),\quad\forall\;(t,x)\in[0,T]\times\mathds{R}^{n}
\end{equation}
\end{thm}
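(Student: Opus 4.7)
The plan is to adapt the perturbed test function method of Evans \cite{evans1989perturbed} to the present unbounded setting, using as the key ingredients the approximate correctors of Proposition \ref{eff ham} and the terminal-data approximation of Proposition \ref{prop: effective initial data}. First I would introduce the upper and lower half-relaxed limits
$$\ov V(t,x,y):=\limsup\limits_{\ep\to 0,\,(t',x',y')\to(t,x,y)}V^\ep(t',x',y'),\qquad \underline V(t,x,y):=\liminf\limits_{\ep\to 0,\,(t',x',y')\to(t,x,y)}V^\ep(t',x',y'),$$
which are finite thanks to the uniform quadratic growth given by Proposition \ref{prelimit sol}. A standard preliminary shows that $\ov V,\underline V$ are $y$-independent: multiplying the HJB equation \eqref{HJB} by $\ep$ and passing to the limit formally forces invariance under $\mathcal{L}^{*}$; this is made rigorous by testing with $\phi(t,x)+\ep\chi(y)$ and exploiting the nondegeneracy (A3) and the recurrence (A4), in the spirit of \cite{alvarez2003singular,bardi2011optimal}.

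The central step is to show that $\ov V(t,x)$ (resp.\ $\underline V(t,x)$) is a viscosity subsolution (resp.\ supersolution) of \eqref{CP-limit HJB} on $(0,T)\times\mathds{R}^{n}$. Given a smooth $\phi(t,x)$ touching $\ov V$ strictly from above at $(\bar t,\bar x)$, I would build the perturbed test function
$$\phi^{\ep,n}(t,x,y):=\phi(t,x)+\ep\,w_{n}(y),$$
where $w_{n}:=u^{\delta(n),n}$ is the solution of the truncated $\delta$-cell problem \eqref{dirichlet-Dn} with frozen parameters $(\bar t,\bar x,D_{x}\phi(\bar t,\bar x),D^{2}_{xx}\phi(\bar t,\bar x))$ and $\delta(n)=n^{-(4+\alpha)}$. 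Localising the maxima of $V^{\ep}-\phi^{\ep,n}$ inside $D_{n}$, the exit-time estimate of Lemma \ref{conv-exit time} ensures that the Dirichlet trace does not enter the viscosity inequality; writing the subsolution condition for $V^\ep$, the Bellman term $H(\bar t,\bar x,y_{\ep},\cdot,\cdot,0)-\mathcal{L}w_n(y_{\ep})$ equals $-\delta(n)w_n(y_{\ep})$ by \eqref{dirichlet-Dn}. Because $\phi^{\ep,n}$ has separated dependences in $x$ and $y$, the singular mixed-derivative term $\ep^{-1/2}\mathrm{tr}(\sigma^{\ep}\varrho^{\top}D^{2}_{xy})$ in \eqref{HJB} drops out, and (A2) handles the passage $\sigma^{\ep}\to\sigma$ in the leading term. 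Letting $\ep\to 0$ and then $n\to\infty$, Proposition \ref{eff ham} gives $\delta(n)w_n(y_{\ep})\to -\overline{H}(\bar t,\bar x,D_{x}\phi,D^{2}_{xx}\phi)$, yielding the required inequality; the supersolution property of $\underline V$ is symmetric. The terminal condition is handled analogously by comparing $V^{\ep}$ at $t$ close to $T$ with the probabilistic representation \eqref{eq: sol - cauchy prob - initial data - truncated} and invoking Proposition \ref{prop: effective initial data} to obtain $\ov V(T,x)=\underline V(T,x)=\overline g(x)$.

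Finally, by Proposition \ref{representation-prop} the effective Hamiltonian admits a Bellman representation, and by Proposition \ref{prop: cont} together with Remark \ref{rem: lipschitz} and assumption (D), which via \cite[Theorem 5.2.3]{stroock1997multidimensional} gives Lipschitz regularity of $\sqrt{\sigma\sigma^{\top}}$ in $x$, the coefficients of $\overline{H}$ are Lipschitz in $x$. A comparison principle for second-order Bellman equations in the class of functions with at most quadratic growth, as in \cite{da2006uniqueness}, then applies to \eqref{CP-limit HJB} and forces $\ov V\leq \underline V$. This yields $\ov V=\underline V=V$, the locally uniform convergence by a Dini-type argument, and the quadratic growth bound \eqref{quadratic gro in thm} inherited from Proposition \ref{prelimit sol}.

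I expect the main obstacle to be the perturbed-test-function construction in the central step: unlike the periodic homogenization case, $w_n$ is defined only on the bounded domain $D_n$ with zero boundary trace, and its very existence depends on the truncation radius $n$. One must therefore delicately balance $n$, the vanishing parameter $\delta(n)$ and the at-most-quadratic growth \eqref{growth h} of $H$ in $y$ so that all remainder terms vanish in the correct order when first $\ep\to 0$ and then $n\to\infty$, and simultaneously ensure that the contact points remain strictly in the interior of $D_n$ so that the Dirichlet data plays no role. This tuning is precisely what makes the scaling $\delta(n)=O(n^{-(4+\alpha)})$ of Proposition \ref{eff ham} and the exit-time bound of Lemma \ref{conv-exit time} decisive; any weaker quantitative control would fail to close the argument.
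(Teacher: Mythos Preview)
Your outline matches the paper's five-step proof (relaxed semilimits, $y$-independence, perturbed test function for the interior PDE, terminal layer via Proposition \ref{prop: effective initial data}, comparison via \cite{da2006uniqueness}), and the last two steps are described exactly as in the paper. Two earlier steps, however, are handled differently from what you write, and your descriptions would not close the argument as stated.

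For the $y$-independence of $\ov V,\underline V$, the paper does not ``test with $\phi(t,x)+\ep\chi(y)$''; that device belongs to Step 3, not here. Instead, scaling \eqref{HJB} by $\ep$ shows directly that $y\mapsto\ov V(t,x,y)$ is a viscosity subsolution (and $\underline V$ a supersolution) of $-\mathcal{L}V=0$ in $\mathds{R}^m$, and one then applies the Liouville property of Lemma \ref{liouville}. The unbounded aspect is \emph{not} covered by \cite{alvarez2003singular,bardi2011optimal}: since $\ov V,\underline V$ have quadratic growth in $y$ by \eqref{quadratic gro-V-eps}, the Lyapunov function $\omega$ must grow strictly faster than quadratically to force $\limsup \ov V/\omega\leq 0$. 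The paper takes $\omega(y)=\tfrac12|y|^2\log|y|$ and uses (A4) to verify $-\mathcal{L}\omega\to+\infty$; this specific choice is the content of the step.

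In the perturbed-test-function step, Lemma \ref{conv-exit time} plays no direct role in localizing the contact points of $V^\ep-\phi^{\ep,n}$; the exit-time bound is used only \emph{inside} the proof of Proposition \ref{eff ham} to kill the tail term $\mathds{E}[e^{-\delta\tau_n}]$. The paper does not work on all of $D_n$ and does not let contact points wander: it fixes a compact cylinder $\Omega=B((\bar t,\bar x),r)\times B(0,R)$ with $R$ chosen \emph{independently} of $n$, and exploits only the locally uniform convergence $\delta(n)\chi_{\delta(n)}\to-\overline H$ on $B(0,R)$ given by Proposition \ref{eff ham}. The argument is by contradiction: if the subsolution inequality for $\ov V$ fails by $5\eta$ at $(\bar t,\bar x)$, then $\psi^\ep=\psi+\ep\chi_\delta$ is a strict supersolution of \eqref{HJB} on $\Omega$; the strict maximum of $\ov V-\psi$ forces $V^\ep-\psi^\ep\leq-\eta$ on $\partial\Omega$ and hence throughout $\Omega$, contradicting $\ov V(\bar t,\bar x)=\psi(\bar t,\bar x)$. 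Thus the Dirichlet trace of $\chi_\delta$ on $\partial D_n$ is never seen, and there is no balancing of $n$ against the location of maxima---the only tuning is the one already encapsulated in Proposition \ref{eff ham}.
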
  

\begin{rem}
Without assumption (D) we prove that the weak semilimits $\underline{V}$ and $\overline{V}$ are independent of $y$ and are, respectively, a super- and a subsolution of \eqref{CP-limit HJB}, as in \cite[Theorem 1]{alvarez2003singular}. If, in addition, we assume (D), we prove that the Comparison Principle holds for \eqref{CP-limit HJB}, which implies the uniform convergence of $V^{\varepsilon}$.
\end{rem}

The last auxiliary result we need is a Liouville property for semi-solutions of the PDE
\begin{equation}
\label{LV=0}
    -\mathcal{L}V(y) = -b(x,y)\cdot \nabla V(y) - \text{trace}(\varrho\varrho^{\top}(x,y) D^{2}V(y)) = 0,\quad \text{in }\mathds{R}^{m} ,
\end{equation}
where $x\in\mathds{R}^{n}$ is frozen, taken from  \cite[Theorem 2.1 \& 2.2]{bardi2016liouville} or \cite[Proposition 3.1]{mannucci2016ergodic}.
\begin{lem}
\label{liouville}
Assume there exist a function  $\omega\in C^{\infty}(\mathds{R}^{m})$ and $R_{0}>0$ such that
\begin{equation}
\label{lyapunov}
    -\mathcal{L}\omega \geq 0\quad \text{in } \overline{B(0,R_{0})}^{C},\quad \omega(y)\rightarrow +\infty \; \text{ as }\; |y|\rightarrow +\infty.
\end{equation}
Then every viscosity subsolution $V\in USC(\mathds{R}^m)$ to \eqref{LV=0} such that $\limsup\limits_{|y|\rightarrow\infty}\frac{V}{\omega}\leq 0$ and every viscosity supersolution $U\in LSC(\mathds{R}^m)$ to \eqref{LV=0} such that $\liminf\limits_{|y|\rightarrow\infty}\frac{U}{\omega}\geq 0$ are constant.
\end{lem}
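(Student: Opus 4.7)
The strategy is a classical Lyapunov-function plus strong maximum principle argument; I treat the subsolution case, as the supersolution case is fully symmetric.

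\emph{Step 1 (Perturbation).} For $\eta>0$, set $V_\eta := V - \eta\omega$. Since $-\mathcal{L}$ is linear and $\omega\in C^\infty$ satisfies $-\mathcal{L}\omega\geq 0$ on $\overline{B(0,R_0)}^C$, $V_\eta$ is a viscosity subsolution of $-\mathcal{L}W=0$ in $\overline{B(0,R_0)}^C$: indeed, if $\phi$ is a smooth test function touching $V_\eta$ from above at some $y_0$ with $|y_0|>R_0$, then $\phi+\eta\omega$ touches $V$ from above at $y_0$, so $-\mathcal{L}(\phi+\eta\omega)(y_0)\leq 0$, whence $-\mathcal{L}\phi(y_0)\leq \eta\,\mathcal{L}\omega(y_0)\leq 0$.

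\emph{Step 2 (The maximum of $V_\eta$ is attained inside $\overline{B(0,R_0)}$).} Writing $V_\eta(y)=\omega(y)\bigl(V(y)/\omega(y)-\eta\bigr)$ and using $\limsup_{|y|\to\infty}V/\omega\leq 0$ together with $\omega(y)\to+\infty$, one gets $V_\eta(y)\to-\infty$ as $|y|\to\infty$. Since $V_\eta$ is upper semicontinuous, it attains its global supremum at some $y_\eta\in\mathds{R}^m$. I claim $|y_\eta|\leq R_0$. Otherwise, $y_\eta$ would be an interior maximum point of the subsolution $V_\eta$ on the open set $\{|y|>R_0\}$, which is connected for $m\geq 2$ (for $m=1$ apply the argument separately on each half-line); by the strong maximum principle for viscosity subsolutions of linear uniformly elliptic operators with continuous coefficients --- applicable thanks to assumption (A3), which gives uniform ellipticity of $\varrho\varrho^\top$, and to the local boundedness of $b$ from (A1) --- $V_\eta$ would be constant on $\{|y|>R_0\}$, contradicting $V_\eta\to-\infty$ at infinity.

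\emph{Step 3 (Passing to the limit $\eta\to 0$).} For every $z\in\mathds{R}^m$ and every $\eta>0$, the inequality $V_\eta(z)\leq V_\eta(y_\eta)$ reads
\begin{equation*}
V(z)\leq V(y_\eta)+\eta\bigl(\omega(z)-\omega(y_\eta)\bigr).
\end{equation*}
Because $\{y_\eta\}_{\eta>0}\subset \overline{B(0,R_0)}$, a subsequence $y_{\eta_k}\to \bar y\in\overline{B(0,R_0)}$, and upper semicontinuity of $V$ together with local boundedness of $\omega$ yields $V(z)\leq V(\bar y)$ for every $z$. Hence $V$ attains its global maximum on $\mathds{R}^m$ at the interior point $\bar y$. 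Applying the strong maximum principle once more, this time to the subsolution $V$ of $-\mathcal{L}W=0$ on the connected set $\mathds{R}^m$, concludes that $V\equiv V(\bar y)$.

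\emph{Supersolution case and main obstacle.} For $U\in LSC(\mathds{R}^m)$ with $\liminf_{|y|\to\infty}U/\omega\geq 0$, the same reasoning applied to $U_\eta := U+\eta\omega$ --- which is a supersolution of $-\mathcal{L}W=0$ in $\overline{B(0,R_0)}^C$ and tends to $+\infty$ at infinity --- produces, via the strong minimum principle, an interior global minimum of $U$, forcing $U$ to be constant. The main technical point is the strong maximum (minimum) principle for viscosity semisolutions of the linear uniformly elliptic operator $-\mathcal{L}$; this is standard for operators with continuous coefficients and uniformly elliptic second-order part, which is precisely what (A1) and (A3) guarantee on any bounded set. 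Alternatively, one can invoke directly the Liouville-type statements \cite[Theorems 2.1--2.2]{bardi2016liouville} or \cite[Proposition 3.1]{mannucci2016ergodic}, from which the claim follows.
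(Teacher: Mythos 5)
Your proof is correct. The paper does not actually prove this lemma: it imports it directly from \cite[Theorems 2.1--2.2]{bardi2016liouville} and \cite[Proposition 3.1]{mannucci2016ergodic}, and your argument --- perturbing by $\eta\omega$, locating the global maximum of $V-\eta\omega$ in $\overline{B(0,R_0)}$ via the decay at infinity and the strong maximum principle, letting $\eta\to 0$, and applying the strong maximum principle once more on all of $\mathds{R}^m$ --- is precisely the Lyapunov-function strategy of those references, so it matches the proof the paper is relying on.
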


\begin{proof}(Theorem \ref{thm-conv-value}) 
The proof follows the one of \cite[Theorem 5.1]{bardi2010convergence} (see also \cite[Theorem 3.2]{bardi2011optimal}). 

{Step 1.} We define the half-relaxed semilimits 
\begin{equation*}
    \underline{V}(t,x,y)= \liminf\limits_{\substack{\varepsilon\to 0\\t'\to t, x'\to x, y'\to y}} V^{\varepsilon}(t',x',y'),\quad \overline{V}(t,x,y)= \limsup\limits_{\substack{\varepsilon\to 0\\t'\to t, x'\to x, y'\to y}} V^{\varepsilon}(t',x',y')
\end{equation*}
for $t<T, x\in\mathds{R}^{n},y\in\mathds{R}^{m}$, and
\begin{equation*}
    \underline{V}(T,x,y)= \liminf\limits_{\substack{\varepsilon\to 0\\t'\to T^{-}, x'\to x, y'\to y}} V^{\varepsilon}(t',x',y'),\;\overline{V}(T,x,y)= \limsup\limits_{\substack{\varepsilon\to 0\\t'\to T^{-}, x'\to x, y'\to y}} V^{\varepsilon}(t',x',y').
\end{equation*}
By  \eqref{quadratic gro-V-eps} they also have quadratic growth, that is,
\begin{equation}
    |\underline{V}(t,x,y)|,\; |\overline{V}(t,x,y)|\; \leq K(1+|x|^2 + |y|^2),\quad \forall\; t\in [0,T],\; x\in \mathds{R}^{n},\: y\in \mathds{R}^{m}.
\end{equation}

{Step 2.} \textit{(We show that $\underline{V}(t,x,y),\overline{V}(t,x,y)$ do not depend on $y$ for every $t\in [0,T)$ and $x\in\mathds{R}^{n}$.)} Arguing as in Step 2 of the proof of \cite[Theorem 5.1]{bardi2010convergence}, we get that $\overline{V}(t,x,y)$ (resp., $\underline{V}(t,x,y)$) is, for every $t\in(0,T)$ and $x\in\mathds{R}^{n}$, a viscosity subsolution (resp., supersolution) to
\begin{equation}
    -\mathcal{L}(x,y,D_{y}V,D^{2}_{yy}V) = 0 \quad \text{ in } \mathds{R}^{m}
\end{equation}
where $\mathcal{L}$ is the differential operator defined in \eqref{inf gen}. Consider now the function $\omega$ defined on $\mathds{R}^m\setminus\{0\}$ such that
\begin{equation}
\label{omega-lypaunov}
    \omega(y) = \frac{1}{2}|y|^{2}\log|y|
\end{equation}
and such that $\omega(0)=0$.
It is easy to check that
\begin{equation*}
    \begin{aligned}
        \nabla \omega(y)  = \left(\frac{1}{2} + \log(|y|)\right)y \quad \text{ and } \quad 
        D^{2}\omega(y)  = \left(\frac{1}{2} + \log(|y|)\right)\mathds{I}_{m} + \frac{y\otimes y }{|y|^{2}} .
    \end{aligned}
\end{equation*}
Therefore, recalling $a=\varrho\varrho^{\top}$, one has
\begin{equation}
\label{eq: gen _ lyapunov}
\begin{aligned}
    -\mathcal{L}\omega  &=  -\left(\frac{1}{2} + \log(|y|)\right) (b(y)\cdot y) - \left(\frac{1}{2} + \log(|y|)\right)\text{trace}(a(y)) -
    \frac{1}{|y|^{2}}\text{trace}((y\otimes y)a(y)) \\
    & \geq -\left(\frac{1}{2} + \log(|y|)\right)\left((b(y)\cdot y) + m\overline{\Lambda}\,\right) -\overline{\Lambda}\, \xrightarrow[|y|\rightarrow \infty]{} +\infty
\end{aligned}
\end{equation}
thanks to assumption \eqref{recurrence condition} and \eqref{non degen}. Then  one can find $R>0$ such that
\begin{equation}
    -\mathcal{L}\omega(y) \geq 0 \quad \text{ in } \; \overline{B(0,R)}^{C},\quad \text{ and } \;\omega(y)\xrightarrow[|y|\rightarrow \infty]{}+\infty.
\end{equation}
We can now use Lemma \ref{liouville} with such a Lyapunov function $\omega$, since $\overline{V},\underline{V}$ have at most a quadratic growth in $y$, to conclude that the functions $y\mapsto \overline{V}(t,x,y)$, $ y\mapsto\underline{V}(t,x,y)$ are constants for every $(t,x)\in(0,T)\times\mathds{R}^{n}$. Finally, using the definition it is immediate to see that this implies that also $\overline{V}(T,x,y)$ and $\underline{V}(T,x,y)$ do not depend on $y$. 

{Step 3.}  \textit{(We show that $\overline{V}$ and $\underline{V}$ are sub and supersolutions to the PDE in \eqref{CP-limit HJB} in $(0,T)\times\mathds{R}^{n}$.)}   
The proof adapts the perturbed test function method \cite{evans1989perturbed, alvarez2003singular}. To show that $\overline{V}$ is a viscosity subsolution we fix $(\overline{t},\overline{x})\in (0,T)\times\mathds{R}^{n}$ and a smooth function $\psi$ such that $\psi(\overline{t},\overline{x})=\overline{V}(\overline{t},\overline{x})$ and $\overline{V}-\psi$ has a strict maximum at $(\overline{t},\overline{x})$. We must prove that
\begin{equation*}
\label{subsol}
    -\psi_{t}(\overline{t},\overline{x}) +\overline{H}(\overline{t},\overline{x},D_{x}\psi(\overline{t},\overline{x}),D^{2}_{xx}\psi(\overline{t},\overline{x})) + \lambda \overline{V}(\overline{t},\overline{x})\leq 0
\end{equation*}
Set $\overline{p}=D_{x}\psi(\overline{t},\overline{x})$,  $\overline{P}=D^{2}_{xx}\psi(\overline{t},\overline{x})$,  and 
assume by contradiction that for some $\eta>0$
\begin{equation*}
- \psi_{t}(\overline{t},\overline{x})  + \overline{H}(\overline{t},\overline{x}, \ov p, 
    \ov P ) + \lambda \psi(\ov{t},\ov{x})\geq 5\eta .
\end{equation*}
By the continuity of $\ov H$  {given by Proposition \ref{prop: cont}}, we can  choose $r>0$ such that
\begin{equation}
\label{contrad} 
- \psi_{t}(t,x)  + \overline{H}({t},{x},  \ov p, \ov P)  + \lambda
     \psi({t},{x} )\geq 4\eta 
\end{equation}
for all $(x,t)\in B((\overline{t},\overline{x}),r)$, and $\ep_o>0$ such that
\begin{equation}
\label{Heps}
|H^\ep({t},{x}, y, D_{x}\psi({t},{x}), D^{2}_{xx}\psi({t},{x}),0) - H(\overline{t},\overline{x}, y, \ov p, \ov P ,0)|< \eta
\end{equation}
for all $(x,t)\in B((\overline{t},\overline{x}),r)$, $y\in \ov B(0, R)$ ($R$ to be chosen soon),  and $\ep\leq\ep_o$.
Now consider, as in \eqref{dirichlet-Dn},  the $\delta(n)$-cell problem
\begin{equation}
\label{cell-prob}
\left\{
    \begin{aligned}
        \delta \chi_{\delta}(y) -\mathcal{L}(\overline{x},y,D\chi_{\delta},D^{2}\chi_{\delta}) + H(\overline{t},\overline{x},y,\overline{p},\overline{P},0) &=& 0,&\quad \text{ in }\;D_{n} ,\\
        \chi_{\delta}(y) &=& 0,&\quad \text{ in }\;  
        \partial D_{n},
    \end{aligned}
\right.
\end{equation}
where $\delta:=\delta(n)=O\left(\frac{1}{n^{4+\alpha}}\right)$ and $D_{n}=B(0,n)$. By Proposition \ref{eff ham} there exists $n_{o}>0$  such that, for every $n\geq n_{o}$,  $R<n_o$,
\begin{equation}
\label{approx-corr}
    |\delta \chi_{\delta}(y) + \overline{H}(\overline{t},\overline{x},\overline{p},\overline{P})| \leq \eta 
    , \quad \forall y\in B(0,{R}) .
\end{equation}
Moreover 
\begin{equation}
\label{approxL}
|\mathcal{L}(\overline{x},y,D\chi_{\delta},D^{2}\chi_{\delta}) - \mathcal{L}({x},y,D\chi_{\delta},D^{2}\chi_{\delta})|<\eta
\end{equation}
for $|x-\ov x|<r$, by decreasing $r$ if necessary, and we set $C_n :=\max_{\ov B(0,{R})} |\chi_{\delta}(y) |$.  We define the perturbed test function
\begin{equation}
    \psi^{\varepsilon}(t,x,y) := \psi(t,x) + \varepsilon\chi_{\delta}(y) ,
\end{equation}
which is in $C^2(\ov \Omega)$ for $\Omega:= B((\overline{t},\overline{x}),r)\times B(0,{R})$. We claim that $\psi^\ep$ is a strict supersolution of the PDE \eqref{HJB} in $\Omega$  for $\ep\leq\ep _o$ and $\ep \la C_n<\eta$. In fact
\begin{equation}\label{strict}
\begin{aligned}
& -\psi^\ep_t(t,x) + H^\ep({t},{x}, y, D_{x}\psi({t},{x}), D^{2}_{xx}\psi({t},{x}),0)  - \mathcal{L}(x,y, D\chi_{\delta},D^{2}\chi_{\delta}) + \la \psi^\ep_t(t,x)  \\
&\quad \quad \quad  \geq - \psi_t(t,x) + H^\ep({t},{x}, y, D_{x}\psi({t},{x}), D^{2}_{xx}\psi({t},{x}),0) -  \delta \chi_{\delta}(y) - H(\overline{t},\overline{x},y,\overline{p},\overline{P},0)+ \la \psi^\ep_t(t,x) \\
&\quad \quad \quad  \geq -\psi_t(t,x) -  \eta +  \overline{H}(\overline{t},\overline{x},\overline{p},\overline{P}) - \eta + \la \psi_t(t,x)  +\la\ep \chi_{\delta}(y)\\
&\quad \quad \quad  \geq 4\eta - 2\eta - \la\ep C_n \geq \eta  >0
\end{aligned}
\end{equation}
where in the first inequality we used \eqref{approxL} and \eqref{cell-prob},  in the  second inequality we used \eqref{Heps} and \eqref{approx-corr}, and in the third inequality we used \eqref{contrad}.

Since the maximum of $\overline{V}-\psi$ at $(\overline{t},\overline{x})$ is strict, we can decrease $r$ so that $\overline{V}-\psi\leq-2\eta$ on $\partial \Omega$.
Moreover 
\begin{equation}
\label{ep_lim}
    \limsup\limits_{\substack{\varepsilon\to 0\\t'\to t, x'\to x, y'\to y}} V^{\varepsilon}(t',x',y') - \psi^{\varepsilon}(t',x',y') = \overline{V}(t,x) - \psi(t,x) 
\end{equation}
and the compactness of $\partial \Omega$ imply that $V^{\varepsilon} - \psi^{\varepsilon}\leq-\eta$ on $\partial \Omega$ for $\ep$ small enough. We claim that, for such $\ep$,
\begin{equation}
\label{wrong}
V^{\varepsilon} - \psi^{\varepsilon}\leq-\eta \qquad\text{in } \Omega.
\end{equation}
In fact, if this is not the case, $V^{\varepsilon} - \psi^{\varepsilon}$ has a maximum point in $\Omega$, a contradiction to the fact that $V^{\varepsilon}$ is a viscosity subsolution of \eqref{HJB} in $\Omega$ and $\psi^{\varepsilon}$ satisfies \eqref{strict}.  Now \eqref{ep_lim} and \eqref{wrong} imply $\overline{V}(\overline{t},\overline{x})<\psi(\overline{t},\overline{x})$, which is a contradiction and completes the proof that $\overline{V}$ is a subsolution to \eqref{CP-limit HJB}.
The proof that $\underline{V}$ is a supersolution is completely analogous.

{Step 4.} \textit{(Behavior of $\overline{V}$ and $\underline{V}$ at time T)} In this step, we adapt the \textit{Step 4} in the proof of \cite[Theorem 5.1]{bardi2010convergence} or in \cite[Theorem 3.2]{bardi2011optimal} using our result in Proposition \ref{prop: effective initial data}. The main difference relies in the use of the sequence of Cauchy problems with bounded domains \eqref{eq: cauchy prob_initial data - truncated} instead of the Cauchy problem \eqref{eq: cauchy prob_initial data} that was used in \cite{bardi2011optimal,bardi2010convergence}. We repeat the proof for the sake of consistency and clarity.\\
We prove only the statement for subsolution, since the proof for the supersolution is completely analogous.

We fix $\overline{x}\in \mathds{R}^n$ and $t_{0}>0$, and we consider, for some $n>0$ to be later made precise, the unique bounded solution $\omega^{r,n}$ to the Cauchy problem in $[0,T(n)]\times D_{n}$ where $T(n) \coloneqq n^{2}t_{0}$ and $D_{n}$ is the ball of radius $n$ in $\mathds{R}^m$
\begin{equation}
\label{eq: proof - conv val - cauchy prob - n}
\left\{
    \begin{aligned}
         \omega_{t} - \mathcal{L}(\overline{x},y,D\omega,D^{2}\omega) = 0,&\quad \text{in }\, (0,T(n)]\times D_{n},\\
         \omega (0,y) = \sup\limits_{\{|x-\overline{x}|\leq r\}} g(x,y),&\quad \text{in } D_{n},\\
         \omega(t,y) = 0, &\quad \text{in } [0,T(n)]\times \partial D_{n}.
    \end{aligned}
\right.
\end{equation}
Using stability properties of viscosity solutions it is not hard to see that $\omega^{r,n}$ converges, as $r\to 0$, to the solution $\omega^{n}$ of \eqref{eq: cauchy prob_initial data - truncated} set in $[0,T(n)]\times D_{n}$. We recall that
\begin{equation*}
    \overline{g}(\overline{x})\coloneqq\mu_{\overline{x}}(g(\overline{x},\cdot))=\int_{\mathds{R}^{m}}g(\overline{x},y)\,\text{d}\mu_{\overline{x}}(y).
\end{equation*}
Using the convergence result in Proposition \ref{prop: effective initial data} and the uniform convergence of $\omega^{r,n}$ to $\omega^{n}$, it is easy to see that for every $\eta>0$ there exist $r_{0}$ and $n_{0}>0$ such that
\begin{equation}
\label{eq: proof - conv omega - approximation}
    \forall\,n\geq n_{0}\,:\quad |\omega^{r,n}(T(n),y) - \overline{g}(\overline{x})|\leq \eta,\quad\forall\,r<r_{0},\,y\in D_{n}\supseteq \overline{D}_{n_{0}}.
\end{equation}
We now fix $r<r_{0}$ and a constant $M_{r}$ such that $V^{\varepsilon}(t,x,y)\leq M_{r}$ and $|g(x,y)|\leq M_{r}/2$ for every $\varepsilon>0$, $x\in \overline{B}:=\overline{B(\overline{x},r)}$ and $y\in \overline{D}:=\overline{D}_{n_{0}}$. 
This is possible  by Proposition \ref{prelimit sol} and assumption \eqref{assumption-cost}. 
Moreover we fix a smooth nonnegative function $\psi$ such that $\psi(\overline{x})=0$ and $\psi(x)+\inf_{y\in \overline{D}}g(x,y)\geq 2M_{r}$ for every $x\in\partial B$
(which is easy to build because  $\inf_{x\in \partial B}\inf_{y\in \overline{D}}g(x,y)\geq -M_{r}/2$). Let $C_{r}$ be a positive constant such that
\begin{equation*}
    |H^{\varepsilon}(t,x,y,D\psi(x),D^{2}\psi(x),0)|\leq C_{r}\quad \text{for } x\in \overline{B},\;  y\in \overline{D} \,\text{ and }\, \varepsilon>0
\end{equation*}
where $H^{\varepsilon}$ is defined in \eqref{H-eps}. Note that such a constant exists thanks to assumptions \eqref{assumption-slow} and \eqref{assumption-cost}. We define the function 
\begin{equation*}
    \psi^{\varepsilon}_{r}(t,x,y) = \omega^{r,n}\left(\frac{T-t}{\varepsilon},y\right) + \psi(x) + C_{r}(T-t),
\end{equation*}
for some fixed $n>n_{0}$, and we claim that it is a supersolution to the parabolic problem
\begin{equation}
\label{eq: parabolic problem - proof - final time}
\left\{
    \begin{aligned}
         -V_{t} + F^{\varepsilon}\left(t,x,y,V,D_{x}V,\frac{D_{y}V}{\varepsilon}, D^{2}_{xx}V^{\varepsilon},\frac{D^{2}_{yy}V}{\varepsilon},\frac{D^{2}_{xy}V}{\sqrt{\varepsilon}}\right) &= 0, \text{ in } (0,T)\times  B \times \overline{D}\\
         V(t,x,y) &= M_{r}, \text{ in } (0,T)\times\partial B\times \overline{D}\\
         V(T,x,y) &= g(x,y), \text{ in } \overline{B}\times\overline{D}
    \end{aligned}
\right.
\end{equation}
where $F^{\varepsilon}$ is defined in \eqref{Ham F}. Indeed
\begin{equation*}
\begin{aligned}
    &-(\psi^{\varepsilon}_{r})_{t} + F^{\varepsilon}\left(t,x,y,D_{x}\psi^{\varepsilon}_{r},\frac{D_{y}\psi^{\varepsilon}_{r}}{\varepsilon},D^{2}_{xx}\psi^{\varepsilon}_{r}, \frac{D^{2}_{yy}\psi^{\varepsilon}_{r}}{\varepsilon},\frac{D^{2}_{xy}\psi^{\varepsilon}_{r}}{\sqrt{\varepsilon}}\right) \\
    &\quad \quad \quad \quad = \frac{1}{\varepsilon}\left[ (\omega^{r,n})_{t}-\mathcal{L}(y,D\omega^{r,n},D^{2}\omega^{r,n}) \right] + C_{r} + H^{\varepsilon}(t,x,y,D\psi(x),D^{2}\psi(x),0) \, \geq 0.
\end{aligned}
\end{equation*}
Moreover $\psi^{\varepsilon}_{r}(T,x,y) = \sup\limits_{\{|x-\overline{x}|\leq r\}}g(x,y) + \psi(x)\geq g(x,y)$. \\
Finally, observe that the constant function $\min\{\,0\,;\,\inf\limits_{y\in \overline{D}}\sup\limits_{\{|x-\overline{x}|\leq r\}} g(x,y)\,\}$
is always a subsolution to \eqref{eq: proof - conv val - cauchy prob - n} and then by a standard comparison principle we obtain 
$$
\omega^{r,n}(t,y)\geq \min\{\,0\,;\,\inf\limits_{y\in \overline{D}}\sup\limits_{\{|x-\overline{x}|\leq r\}} g(x,y)\,\}.
$$
 This implies, for all $x\in\partial B$,
\begin{equation*}
\begin{aligned}
    \psi^{\varepsilon}_{r}(t,x,y) & \geq \, \min\{\,0\,;\,\inf\limits_{y\in \overline{D}}\sup\limits_{\{|x-\overline{x}|\leq r\}} g(x,y)\,\}+ 2M_{r} - \inf\limits_{y\in\overline{D}}g(x,y) + C_{r}(T-t) \geq \, M_{r}
\end{aligned}
\end{equation*}
where we have used either the fact that $|g(x,y)|\leq M_{r}/2$, and hence $- \inf\limits_{y\in\overline{D}}g(x,y)\geq -M_{r}/2$, when we have  $\min\{\,0\,;\,\inf\limits_{y\in \overline{D}}\sup\limits_{\{|x-\overline{x}|\leq r\}} g(x,y)\,\}= 0$, or otherwise, we have used the fact that $\inf\limits_{y\in \overline{D}}\sup\limits_{\{|x-\overline{x}|\leq r\}} g(x,y)\, - \inf\limits_{y\in\overline{D}}g(x,y) \geq 0$. In the first case, we get $\psi^{\varepsilon}_{r}(t,x,y)\geq 3M_{r}/2$ and in the second case we have $\psi^{\varepsilon}_{r}(t,x,y)\geq 2M_{r}$. Then $\psi^{\varepsilon}_{r}$ is a supersolution to \eqref{eq: parabolic problem - proof - final time}. For our choice of $M_{r}$ we get that $V^{\varepsilon}$ is a subsolution to \eqref{eq: parabolic problem - proof - final time}. Moreover both $V^{\varepsilon}$ and $\psi^{\varepsilon}_{r}$ are bounded in $[0,T]\times \overline{B}\times \overline{D}$, because of the estimate \eqref{quadratic gro-V-eps}, of the boundedness of $\omega^{r,n}$ and of the regularity of $\psi$. So, a standard comparison principle for viscosity solutions gives
\begin{equation*}
\begin{aligned}
    V^{\varepsilon}(t,x,y)  \leq \, \psi^{\varepsilon}_{r}(t,x,y) = \omega^{r,n}\left(\frac{T-t}{\varepsilon},y\right) + \psi(x) + C_{r}(T-t)
\end{aligned}
\end{equation*}
for every $0<r<r_{0}$, $n>n_{0}$ $\varepsilon>0$, $(t,x,y)\in [0,T]\times \overline{B}\times \overline{D}$. We compute the upper limit of both sides of the previous inequality as $(\varepsilon,t,x,y)\to (0,t',x',y')$ for $t'\in (0,T)$, $x'\in B$, $y'\in D$ and $\varepsilon\coloneqq\varepsilon(n)=\frac{T-t}{T(n)}$ (recalling $T(n)=n^{2}t_{0})$ and get, using \eqref{eq: proof - conv omega - approximation},
\begin{equation*}
    \overline{V}(t',x')\leq \overline{g}(\overline{x}) + \eta + \psi(x') + C_{r}(T-t').
\end{equation*}
Then taking the upper limit for $(t',x')\to (T,\overline{x})$, we obtain obtain $\overline{V}(T,\overline{x})\leq \overline{g}(\overline{x}) + \eta$ which permits us to conclude recalling that $\eta$ is arbitrary.

The proof for $\underline{V}$ is completely analogous, once we replace the Cauchy problem \eqref{eq: proof - conv val - cauchy prob - n} with 
\begin{equation*}
\left\{
    \begin{aligned}
         \omega_{t} - \mathcal{L}(y,D\omega,D^{2}\omega) = 0, &\quad \text{in }\, (0,T(n)]\times D_{n},\\
         \omega (0,y) = \inf\limits_{\{|x-\overline{x}|\leq r\}} g(x,y), &\quad \text{in } D_{n},\\
         \omega(t,y) = 0, &\quad \text{in } [0,T(n)]\times \partial D_{n}.
    \end{aligned}
\right.
\end{equation*}

\hfill\\
{Step 5.} \textit{(Uniform convergence)}.
We observe that by definition $\overline{V}\geq \underline{V}$ and that both $\overline{V}$ and $\underline{V}$ satisfy the same quadratic growth condition \eqref{quadratic gro in thm}.  Moreover the Hamiltonian $\overline{H}$ defined in \eqref{eff-Hamiltonian} can be written thanks to Proposition \ref{representation-prop} as a Bellman Hamiltonian of the form
\begin{equation*}
    \overline{H}(t,x,p,P)=\min\limits_{\nu\in L^{\infty}(\mathds{R}^{m},U)}\bigg\{\, -\text{trace}(\overline{\sigma}\,\overline{\sigma}^{\top}P) -  \overline{f}\cdot p  - \overline{\ell} \; \bigg\}
\end{equation*}
where 
\begin{equation*}
\begin{aligned}
    &\overline{\sigma}  =\overline{\sigma}(x,\nu)=\sqrt{\int_{\mathds{R}^{m}}\sigma\sigma^{\top}(x,y,\nu(y))\,\text{d}\mu_{x}(y)}\,\\
    & \overline{f}  = \overline{f}(x,\nu)=\int_{\mathds{R}^{m}}f(x,y,\nu(y))\,\text{d}\mu_{x}(y)\\
    & \overline{\ell}  = \overline{\ell}(t,x,\nu)=\int_{\mathds{R}^{m}}\ell(t,x,y,\nu(y))\,\text{d}\mu_{x}(y).
\end{aligned}
\end{equation*}
Under assumptions (D), we actually have in the case (D.a)
\begin{equation*}
    \overline{\sigma} = \sqrt{\sigma\sigma^{\top}(x)},
\end{equation*}
and in the case (D.b), the invariant measure of the fast process $Y$ does not depend on $x$. 
Therefore, $\overline{\sigma},\overline{f},\overline{\ell}$ inherit regularity and growth conditions of $\sigma,f,\ell$ thanks to assumptions (A), (B), (C), (D) and Remark \ref{rem: lipschitz}, and  fall in the framework of \cite{da2006uniqueness}. Hence we can use the comparison result between sub- and supersolutions to parabolic problems satisfying a quadratic growth condition, given in \cite[Theorem 2.1]{da2006uniqueness}, to deduce $\underline{V}\geq \overline{V}$. Therefore $\underline{V}=\overline{V}=:V$. In particular $V$ is continuous, and by definition of half-relaxed semilimits, this implies that $V^{\varepsilon}$ converges locally uniformly to $V$ (see \cite[Lemma V.1.9]{bardi2008optimal}).
\end{proof}

\subsection*{Acknowledgements} The authors wish to thank Markus Fischer for useful conversations and for pointing out \cite[Prop. 1.4]{herrmann2006transition} used in the proof of Lemma \ref{conv-exit time}. The authors also thank the two anonymous referees for the many useful comments.

\bibliographystyle{amsplain}
\bibliography{bibliography}

\end{document}